\documentclass[reqno]{amsart}     
\frenchspacing 
\sloppy 
 
\usepackage{amssymb, amsmath, amsthm, epsf, epsfig}
\usepackage[a4paper,vmargin={3cm,3cm},hmargin={3cm,3cm}]{geometry}

\usepackage{bbm}
\usepackage{xcolor}
\usepackage{enumerate}
\usepackage{comment}
\usepackage{hyperref}

\renewcommand{\(}{\left(}
\renewcommand{\)}{\right)}
\newtheorem{theo}{Theorem}
\newtheorem{prop}{Proposition}
\newtheorem{lemma}{Lemma}
\newtheorem{cor}{Corollary\!\!}

\newtheorem{ncor}{Corollary}
 
\theoremstyle{definition}

\newtheorem{df}{Definition}
\newtheorem{ex}{Example}

\theoremstyle{remark}

\newtheorem{rem}{Remark\!\!}
 
\newtheorem{nrem}{Remark}

\newcommand{\beq}{\begin{equation}} 
\newcommand{\eeq}{\end{equation}} 
\newcommand{\bal}{\begin{align}} 
\newcommand{\eal}{\end{align}} 
\newcommand{\bals}{\begin{align*}} 
\newcommand{\eals}{\end{align*}} 
\newcommand{\barr}[1]{\begin{array}{#1}} 
\newcommand{\earr}{\end{array}}

\newcommand{\bth}{\begin{theo}} 
\newcommand{\bl}{\begin{lemma}} 
\newcommand{\el}{\end{lemma}} 
\newcommand{\bp}{\begin{prop}} 
\newcommand{\ep}{\end{prop}} 
\newcommand{\bdf}{\begin{df}} 
\newcommand{\edf}{\end{df}} 
\newcommand{\brem}{\begin{rem}} 
\newcommand{\erem}{\end{rem}} 
\newcommand{\bnrem}{\begin{nrem}} 
\newcommand{\enrem}{\end{nrem}} 
\newcommand{\bex}{\begin{ex}} 
\newcommand{\eex}{\end{ex}} 
\newcommand{\bcor}{\begin{cor}} 
\newcommand{\ecor}{\end{cor}} 
\newcommand{\bncor}{\begin{ncor}} 
\newcommand{\encor}{\end{ncor}} 
\newcommand{\bpf}{\begin{proof}} 
\newcommand{\epf}{\end{proof}}

\def\({\left(} 
\def\){\right)}


\newcommand{\mM}{\mathsf{M}} 
\newcommand{\mC}{\mathsf{C}} 
\newcommand{\mB}{\mathsf{B}} 
\newcommand{\cB}{\mathcal{B}} 
\newcommand{\mO}{\mathsf{O}} 
\newcommand{\cT}{\mathcal{T}} 
\newcommand{\atv}{\,{\buildrel d \over \approx}\,}
\newcommand{\co}{\mathrm{c}}
\newcommand{\ve}{\mathrm{v}}

\newcommand{\convd}{\,{\buildrel d \over \longrightarrow}\,}
\newcommand{\convp}{\,{\buildrel p \over \longrightarrow}\,}
\renewcommand{\Pr}[1]{\mathbb{P}(#1)}

\newcommand{\Ex}[1]{\mathbb{E}[#1]}
\newcommand{\Exb}[1]{\mathbb{E}\left[ #1 \right]}
\newcommand{\Va}[1]{\mathbb{V}[#1]}

\numberwithin{equation}{section}

\title{Cut Vertices in Random Planar Maps}

\author{Michael Drmota$^*$, Marc Noy\textsuperscript{\dag{}} {} and Benedikt Stufler\textsuperscript{\dag{}\dag{}} {} }

\thanks{{}$^*$ TU Wien, Institute of Discrete Mathematics and Geometry,
Wiedner Hauptstrasse 8-10, A-1040 Vienna, Austria. michael.drmota@tuwien.ac.at. Research 
supported by the
Austrian  Science Foundation FWF, project S9604.}

\thanks{\textsuperscript{\dag{}} Universitat Polit\`ecnica de Catalunya, 
Departament de Matem\`atica Aplicada II, Jordi Girona
1--3, 08034 Barcelona, Spain. marc.noy@upc.edu. Research supported in part by Ministerio de
Ciencia e Innovaci\'on MTM2008-03020.}

\thanks{\textsuperscript{\dag{}\dag{}} TU Wien, Institute of Discrete Mathematics and Geometry,
	Wiedner Hauptstrasse 8-10, A-1040 Vienna, Austria. \includegraphics{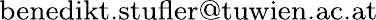}}

\begin{document}

\begin{abstract}
The main goal of this paper is to determine the asymptotic behavior of the number $X_n$
of cut-vertices in random planar maps with $n$ edges. It is shown that $X_n/n \to c$
in probability (for some explicit $c>0$). For so-called subcritical classes of planar
maps (like outerplanar maps) we obtain a central limit theorem, too.
Interestingly the combinatorics behind this seemingly simple problem is quite involved.
\end{abstract}

\maketitle

\section{Introduction}
\label{sec1}

A {\it planar map} is a connected planar graph, possibly with loops and multiple edges,
together with an embedding in the plane. A map is {\it rooted} if a vertex $v$ and an edge
$e$ incident with $v$ are distinguished, and are called the {\it root-vertex} and {\it root-edge},
respectively. Usually the root-edge is considered as directed away from the root-vertex.
In this sense, the face to the right of $e$ is called the {\it root-face} and is usually taken
as the outer face. All maps in this paper are rooted.

\begin{figure}[h]
	\centering
	\begin{minipage}{\textwidth}
		\centering
	\includegraphics[width=0.6\linewidth]{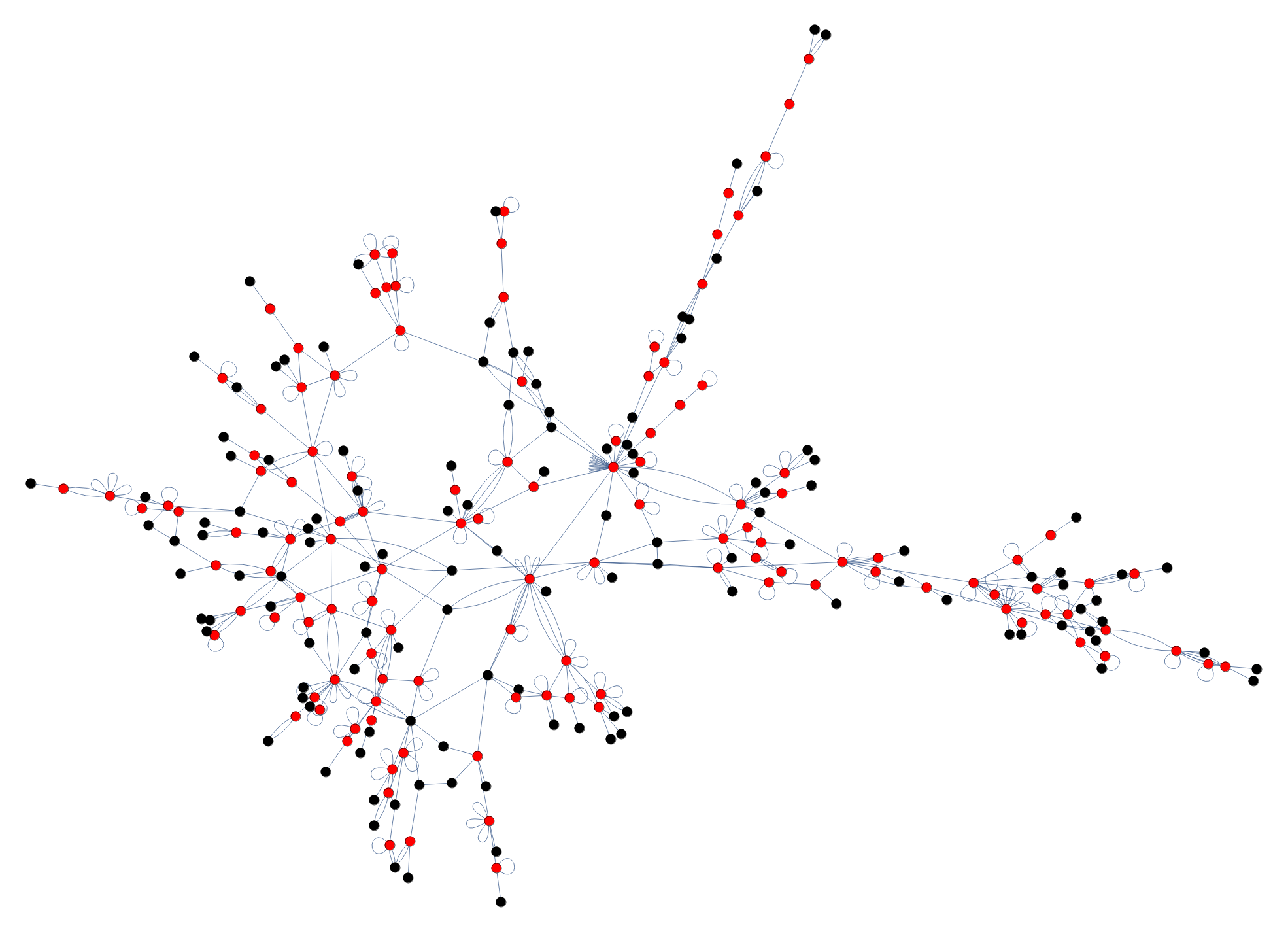}
		\caption{A randomly generated planar map with $500$ edges, embedded using a spring-electrical method. Cut vertices are coloured red.}
		\label{fi:cutv}
	\end{minipage}
\end{figure}

The enumeration of rooted maps is a classical subject, initiated by Tutte in the
1960's. Tutte (and Brown) introduced the technique now called ``the quadratic method'' in order to
compute the number $M_n$ of rooted maps with $n$ edges, proving the formula
\[
M_n = \frac{2(2n)!}{(n+2)!n!}3^n.
\]
This was later extended by Tutte and his school to several classes of planar maps:
2-connected, 3-connected, bipartite, Eulerian, triangulations, quadrangulations, etc.

The standard random model is to assume that every map of size $n$ appears with the
same probability $1/M_n$. Within this random setting several shape parameters of 
random planar maps have been studied so far, see for example~\cite{MR1871555,MR3071845, inprocdoubletri,DRMOTA2019108666}.
However, the number of cut vertices does not appear to have been studied. (A cut vertex $v$ is a vertex that disconnects a 
graph when it is removed. That is, we may partition the edge set into two non-empty classes such that $v$ is the only vertex that is incident to edges of both classes.) Figure~\ref{fi:cutv} displays a randomly generated planar map with cut vertices coloured red.  It is natural to expect that the number of cut vertices is asymptotically linear in $n$, and this is in fact true.

\begin{theo}\label{Th1}
Let $X_n$ denote the number of cut vertices in random planar maps with $n$ edges.
Then we have
\begin{equation}\label{eqTh1}
\frac {X_n}n \convp c,
\end{equation}
where
\[
c = \frac{5-\sqrt{17}}4 \approx 0.219223594
\]
In particular we have $\mathbb{E}\, X_n = c n + O(1)$. 
\end{theo}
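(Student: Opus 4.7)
The plan is to apply the block decomposition of rooted planar maps together with singularity analysis. Let $M(x,u)$ denote the series counting rooted planar maps with $x$ marking edges and $u$ marking cut vertices. Every rooted map decomposes uniquely into its root-block, the non-separable block containing the root-edge, with a rooted planar sub-map substituted at each non-root corner of the root-block. A vertex $v$ of the root-block is a cut vertex of the whole map iff at least one of the sub-maps substituted at the corners of $v$ is non-trivial, while cut vertices outside the root-block are counted recursively inside the sub-maps. Encoding ``at least one non-trivial sub-map at $v$'' by the factor $(1-u) + u\,M^{\sharp}(x,u)^{d_v'}$, where $M^{\sharp}$ is the variant of $M$ in which the root-vertex is not counted as a cut vertex and $d_v'$ is the number of non-root-corners at $v$ in the root-block, yields a coupled pair of functional equations for $M$ and $M^{\sharp}$. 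At $u=1$ both series collapse to the usual $M(x)$, and the equations reduce to Tutte's classical identity $M^2 - M = B(xM^2)$, where $B(y)$ is the generating function of non-separable rooted maps.

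First I would compute $\mathbb{E} X_n$ by differentiating these functional equations at $u=1$. Setting $C(x) := \partial_u M(x,u)|_{u=1}$, so that $[x^n]C(x) = \mathbb{E}(X_n)\,M_n$, one gets a linear system for $C(x)$ and the analogous derivative $C^{\sharp}(x) := \partial_u M^{\sharp}(x,u)|_{u=1}$ whose coefficients involve $M$ and a bivariate refinement of $B$ tracking the vertex-degree distribution of the root-block. Both $M$ and this refined $B$ admit Tutte--Brown parametric closed forms with a common dominant singularity of exponent $3/2$ at $\rho = 1/12$. Since differentiation in $u$ lowers the singular exponent by $1$, the Flajolet--Odlyzko transfer theorem gives $[x^n]C(x) = c\,n\,M_n\bigl(1+O(1/n)\bigr)$, and hence $\mathbb{E} X_n = cn + O(1)$. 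The explicit computation of the ratio of the leading singular coefficients produces the quadratic $2c^2 - 5c + 1 = 0$, whose unique root in $(0,1)$ is $c = (5 - \sqrt{17})/4$.

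To upgrade the mean statement to convergence in probability, the plan is to verify a perturbed-singularity / quasi-power statement for $M(x,u)$: for $u$ in a small real neighbourhood of $1$ the functional equation admits an analytic dominant singularity $\rho(u)$ with unchanged exponent $3/2$. Hwang's quasi-power theorem then yields $X_n/n \convp c$ together with $\mathrm{Var}(X_n) = O(n)$. Equivalently, one can extract the second factorial moment $\mathbb{E}[X_n(X_n-1)] = [x^n]\partial_u^2 M(x,u)|_{u=1}/M_n$ from a second differentiation of the functional equations, verify directly that $\mathrm{Var}(X_n) = o(n^2)$, and apply Chebyshev's inequality.

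The hard step is the combinatorial one, namely setting up and manipulating the refined series of non-separable rooted maps that records the vertex-degree distribution of the root-block. The cut-vertex indicator attaches to a \emph{vertex}, whereas the substitution acts corner-by-corner, so a naive substitution would over-count a vertex with several non-trivial sub-maps attached. Working inside Tutte--Brown's parametric representation, one therefore needs a closed-form expression for the bivariate series of non-separable maps marked by edges and by a suitable degree generating function of the vertices, together with its singular expansion to a sufficiently high order to determine $\rho(u)$ and its first two derivatives at $u=1$. This algebraic step is the source of the ``quite involved'' combinatorics advertised in the abstract; once it is completed, the singular analysis and the deduction of $\mathbb{E} X_n = cn + O(1)$ and $X_n/n \convp c$ are essentially mechanical.
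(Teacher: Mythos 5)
Your first-moment computation follows essentially the same route as the paper's combinatorial proof in Section~4: block decomposition, a refined generating function of $2$-connected maps recording the vertex-degree profile of the root block, the substitution $w_k \mapsto uM^k-u+1$ to encode ``at least one non-trivial submap at $v$'', differentiation at $u=1$, and singularity analysis at $z_1=1/12$. You correctly identify the hard point -- the cut-vertex indicator lives on vertices while the substitution acts corner by corner -- and the paper resolves it exactly as you anticipate, by introducing $\overline A(z;w_1,w_2,\ldots;u)$ and reducing the derivative $\sum_k \overline A_{w_k}$ at the geometric point to a vertex-marked, degree-discounted series $B^\bullet$ obtained by the kernel method (Lemma~1). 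So far the plan is sound, modulo the fact that the quadratic $2c^2-5c+1=0$ is asserted rather than derived.

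The genuine gap is in your concentration step. Your primary plan -- show that $M(x,u)$ has an analytic movable singularity $\rho(u)$ of unchanged exponent $3/2$ for $u$ near $1$ and invoke quasi-powers -- cannot work as stated: if it did, it would immediately yield the central limit theorem for $X_n$, which the paper explicitly leaves as a conjecture and identifies as the main obstacle. The obstruction is concrete. For $u\ne 1$ the functional equation involves $\overline A\left(z;\,uM-u+1,\,uM^2-u+1,\ldots;1\right)$, and the arguments $uM^k-u+1$ are not of the geometric form $xv^k$, so the infinite-variable series cannot be collapsed to the known trivariate $A(z,x,v)$; there is no closed form from which to locate $\rho(u)$, and one can only differentiate at $u=1$. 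Moreover, for general planar maps the composition scheme is critical ($z_1M(z_1)^2=z_0$), which is precisely why the subcritical perturbation argument of Theorem~2 does not apply here. Your fallback -- second factorial moment plus Chebyshev -- is viable in principle and is what the paper sketches in Section~4.4, but it requires a further kernel-method computation for a doubly marked series $B^{\bullet\bullet}$ that the paper itself declines to carry out in detail; it is not ``essentially mechanical''. The paper's actual proof of $X_n/n\convp c$ is different in kind: it is probabilistic, using the local limit of $\mM_n$ re-rooted at a uniformly chosen vertex, continuity of the cut-vertex indicator on a suitable subset $\Omega$, the continuous mapping theorem applied to random probability measures, and Slutsky's theorem, with the constant extracted from the known asymptotic degree distribution via $p=1-q(3/4)$. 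You should either adopt that route for the concentration claim or commit to the full second-moment computation.
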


We provide two  proofs of Theorem~\ref{Th1}. First, by a probabilistic approach that makes use of the local convergence of random planar maps re-rooted at a uniformly selected vertex (see Section~\ref{sec:local}). Second, by a  self-contained combinatorial approach
based on generating functions and singularity analysis (see Section~\ref{sec:comb}). The combinatorial approach yields additional information on related generating functions and error terms.

We conjecture that $X_n$ satisfies additionally  a normal central limit theorem. The intuition behind this is that $X_n$ may be written as the sum of $n$ seemingly weakly dependent indicator variables. The conjecture is backed up numerical simulations we carried out, see the histogram in Figure~\ref{fi:histogram}. Sampling over $2\cdot10^5$ planar maps with $n=5 \cdot 10^5$ edges, we obtained an average value of approximately $\mathbf{0.219223}677  \cdot n$ cut vertices. This value is already very close to the exact asymptotic value obtained in Theorem~\ref{Th1}. The variance  was approximately $0.082788 \cdot n$. Actually the combinatorial method presented in Section~\ref{sec:comb} can be extended to
determine the asymptotic behaviour of the variance (see Section~\ref{sec:comb-variance}). However, we will not work out the (very lengthy) details.
The main obstacle for establishing  a limit normal law is that it seems  impossible to extend this method to arbitrary moments.

We remark that the analogous problem for graphs is easier than for maps. This is because when decomposing recursively a connected graph into blocks (2-connected) components, it is straightforward  to mark whether a given vertex becomes a cut vertex or not. A central limit theorem for the number of cut vertices in random planar graphs was proved in \cite{GNR03}. 
A related parameter is the number of blocks, which also obeys a central limit theorem in the case of planar graphs \cite{GNR03}. Here we prove an analogous 
result for planar maps. The reason we can handle this parameter efficiently is that in the recursive decomposition of a planar map into blocks it is easy to keep track of the number of blocks, whereas this is not possible for the number of cut vertices: a vertex $v$ becomes a cut vertex if no corner incident with $v$ contains  a non-empty map. another situation in which vertex cuts can be handled leading to a central limit theorem is the number of 2-cuts and 3-cuts in triangulations \cite{MR2735346}.

\begin{theo}\label{th:blocks}
	Let $X_n$ denote the number of blocks in random planar maps with $n$ edges.
Then $X_n$ satisfies a central limit theorem of the form
\begin{equation}
\frac{X_n -  n/2}{\sqrt n} \convd N(0,\sigma^2)
\end{equation}
 $\sigma^2 =3/8$.
\end{theo}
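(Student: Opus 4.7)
The plan is to use the analytic-combinatorial / singular composition approach. Let $M(x,u) = \sum_m x^{e(m)} u^{b(m)}$ be the bivariate generating function summed over rooted planar maps $m$, where $x$ marks edges and $u$ marks blocks. Decomposing a non-trivial rooted map along the unique block $B_0$ containing the root edge (each of its $2\,e(B_0)$ corners carries a possibly trivial rooted submap) yields the functional equation
\[
M(x,u) \;=\; 1 + u\, B\!\bigl(x\, M(x,u)^2\bigr),
\]
where $B(y)$ is the generating function of rooted 2-connected planar maps by edges.

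At $u=1$, $M(x)$ is Tutte's classical series; the asymptotic $M_n \sim c\cdot 12^n n^{-5/2}$ corresponds to a $3/2$-type singularity $M(x) - M(\rho) = a_1(\rho-x) + a_2(\rho-x)^{3/2} + O\!\left((\rho-x)^2\right)$ at $\rho = 1/12$, with $M(\rho) = 4/3$. Hence $B$ has radius of convergence $y^* = \rho\, M(\rho)^2 = 4/27$, and from $B(xM^2) = M - 1$ one reads $B(y^*) = M(\rho) - 1 = 1/3$; moreover $B$ inherits a $3/2$-type singularity at $y^*$, placing us in a supercritical composition situation (Flajolet--Sedgewick, Chapter VI). For $u$ near $1$ the same mechanism locates the singularity $\rho(u)$ of $M(x,u)$: it occurs precisely when $x\, M(x,u)^2$ reaches $y^*$, so
\[
\rho(u)\, M(\rho(u), u)^2 \;=\; \tfrac{4}{27}, \qquad M(\rho(u), u) \;=\; 1 + u\, B(y^*) \;=\; 1 + \tfrac{u}{3},
\]
and eliminating $M(\rho(u),u)$ yields the explicit formula $\rho(u) = 4/\bigl(3(3+u)^2\bigr)$, analytic in a complex neighbourhood of $u=1$.

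Standard transfer theorems for supercritical compositions then show that $M(x,u)$ retains a $3/2$-singularity at $\rho(u)$, uniformly for $u$ close to $1$, with an analytic pre-factor $C(u)$: one gets $[x^n] M(x,u) \sim C(u)\, \rho(u)^{-n}\, n^{-5/2}$. Dividing by $[x^n]M(x,1)$ brings $\Ex{u^{X_n}}$ into the quasi-power form $(C(u)/C(1))(\rho(1)/\rho(u))^n$, and Hwang's quasi-powers theorem delivers the Gaussian limit with asymptotic mean $\mu(1)\, n + O(1)$ and variance $\mu'(1)\, n + O(1)$, where $\mu(u) := -u\rho'(u)/\rho(u)$. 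A short calculation from $\rho(u) = 4/\bigl(3(3+u)^2\bigr)$ yields $\mu(u) = 2u/(3+u)$, so $\mu(1) = 1/2$ and $\mu'(1) = 6/(3+1)^2 = 3/8$, recovering exactly the mean $n/2$ and variance $3n/8$ of the theorem. The main technical obstacle is establishing the supercritical composition uniformly in $u$: one must verify that the smooth and singular parts of $B(y) = \tfrac13 + 3(y-y^*) + c\,(y^*-y)^{3/2} + \cdots$ (with $c \neq 0$) transport into a uniform $3/2$-singularity of $M(x,u)$ at $\rho(u)$ for complex $u$ near $1$, with $C(u)$ analytic; the mean and variance computations are then routine.
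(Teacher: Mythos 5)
Your proposal is correct and follows essentially the same route as the paper: the same block decomposition yields an equivalent bivariate functional equation (you mark all blocks, the paper marks non-root blocks), the singularity $\rho(u)=4/(3(3+u)^2)$ is located by the same criticality condition $\rho(u)\,M(\rho(u),u)^2=4/27$, and the quasi-powers machinery delivers the same mean $n/2$ and variance $3n/8$. The only differences are cosmetic: you solve the characteristic system directly via $M(\rho(u),u)=1+u/3$, whereas the paper extracts $\rho(w)$ from the discriminant of the minimal polynomial of $M(z,w)$; note also that in the paper's (and Flajolet--Sedgewick's) terminology this composition scheme is \emph{critical} rather than supercritical, though the mechanism you describe is the intended one.
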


\begin{figure}[h]
	\centering
	\begin{minipage}{\textwidth}
		\centering
	\includegraphics[width=0.75\linewidth]{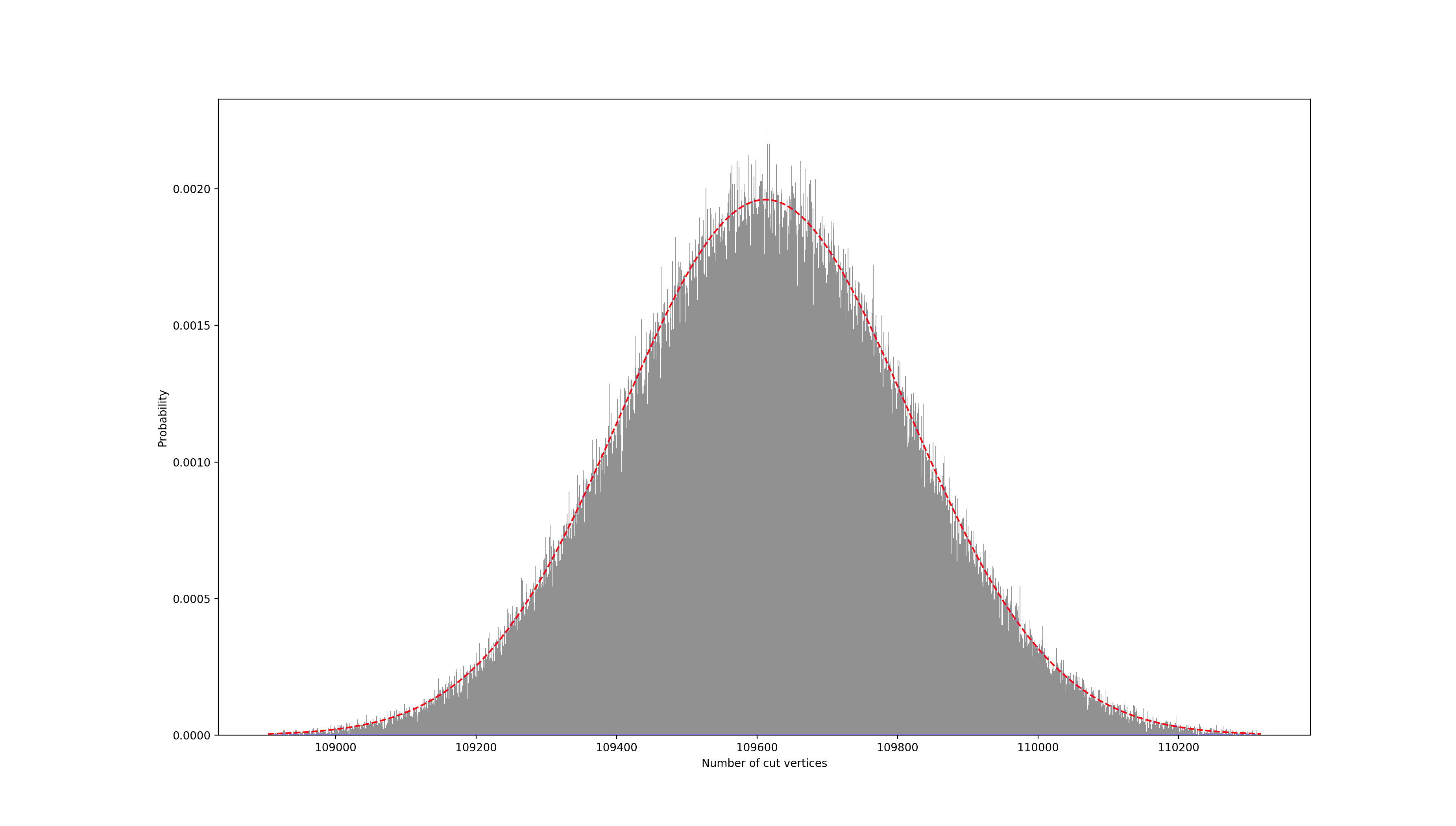}
		\caption{Histogram for the number of cut vertices in more than $2 \cdot 10^5$ randomly generated planar maps with $n=5 \cdot 10^5$ edges each.}
		\label{fi:histogram}
	\end{minipage}
\end{figure}

One important property of random planar maps that we will use in the proof of Theorem~\ref{Th1}
is that it has a {\it giant 2-connected component} of linear size. 
There are, however, several interesting subclasses of planar maps, for example series-parallel
maps,\footnote{A graph is series-parallel if does not contain $K_4$ as a minor.} where all 2-connected components are (typically)
of small size. Informally this means that on a global scale the map looks more or less like a tree. 
Such classes of maps are called subcritical; we will give a precise definition in Section~\ref{sec:gf}.
The proof of Theorem~\ref{Th2} is given in Section~\ref{sec:subcritical}.

\begin{theo}\label{Th2}
Let $X_n$ denotes the number of cut vertices in random planar maps of size $n$
in an aperiodic subcritical class of planar maps.
Then $X_n$ satisfies a central limit theorem of the form
\begin{equation}\label{eqTh2}
\frac{X_n - c n}{\sqrt n} \convd N(0,\sigma^2)
\end{equation}
where $c> 0$ and $\sigma^2 > 0$.
\end{theo}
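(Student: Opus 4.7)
The plan is to introduce a bivariate generating function $M(z,u)$ in which $z$ marks edges and $u$ marks cut vertices, to apply the smooth implicit-function schema to it, and to conclude via Hwang's quasi-power theorem, in the same spirit as the analogous proofs for subcritical graph classes.

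Recall first the block decomposition in the subcritical class $\mathcal{M}$: every rooted map is encoded by its root block $B$ (a 2-connected rooted map in the class) together with a further rooted map inserted at every free corner of every vertex of $B$. Let $\mathcal{B}$ denote the 2-connected subclass. The decomposition yields a functional equation of the form
\begin{equation}\label{eq:plan:rooteq}
 M(z) = \Phi(z, M(z)),
\end{equation}
where $\Phi(z,y)$ is an explicit analytic function built from the generating series of $\mathcal{B}$ by substituting $y$ into each free-corner slot. Subcriticality of $\mathcal{M}$ is equivalent to the statement that $\Phi$ is analytic at the critical point $(\rho, \tau)$, with $\rho$ the dominant singularity of $M$ and $\tau = M(\rho)$, and that $\Phi_y(\rho, \tau) = 1$, $\Phi_{yy}(\rho, \tau) > 0$. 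In particular $M$ has a square-root singularity $M(z) = g(z) - h(z)\sqrt{1 - z/\rho}$ with $h(\rho) \ne 0$.

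Next, mark cut vertices. A vertex of a map is a cut vertex precisely when at least two of its incident block-insertion slots contain a non-trivial map. Writing $\widetilde M(z,u) := M(z,u) - 1$ for the series of non-empty maps, the standard weight $(1 + \widetilde M(z,u))^k$ attached to a vertex with $k$ free slots has to be replaced by
\begin{equation*}
 1 + k\,\widetilde M(z,u) + u\bigl[(1+\widetilde M(z,u))^k - 1 - k\,\widetilde M(z,u)\bigr],
\end{equation*}
which inserts the factor $u$ exactly when at least two of the $k$ slots are non-trivial. Performing this substitution wherever a free-corner product appears inside $\Phi$, and handling the root vertex in the same way, produces a deformed functional equation
\begin{equation}\label{eq:plan:bivariate}
 M(z,u) = \widetilde\Phi(z, M(z,u), u),
\end{equation}
with $\widetilde\Phi$ analytic in a polydisc around $(\rho, \tau, 1)$ and $\widetilde\Phi(z, y, 1) = \Phi(z, y)$.

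Because the branching condition $\widetilde\Phi_y = 1$ at the critical point is preserved up to a small analytic perturbation as $u$ moves near $1$, the smooth implicit schema of Flajolet--Sedgewick / Drmota produces an analytic critical curve $z = \rho(u)$ with $\rho(1) = \rho$ and a local square-root representation
\begin{equation*}
 M(z,u) = g(z,u) - h(z,u)\sqrt{1 - z/\rho(u)}, \qquad h(\rho(1), 1) \ne 0 .
\end{equation*}
Aperiodicity of the class guarantees that $\rho(u)$ is the unique dominant singularity of $z \mapsto M(z,u)$ on its circle, uniformly for $u$ in a small complex neighbourhood of $1$, so singularity transfer yields
\begin{equation*}
 [z^n] M(z,u) = \frac{h(\rho(u),u)}{2\sqrt{\pi}}\, \rho(u)^{-n}\, n^{-3/2}\,\bigl(1 + O(n^{-1})\bigr)
\end{equation*}
uniformly in $u$, and Hwang's quasi-power theorem then delivers the CLT \eqref{eqTh2} with $c = -\rho'(1)/\rho(1)$ and with $\sigma^2$ given by the standard expression in $\rho(1), \rho'(1), \rho''(1)$.

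It remains to confirm positivity of $c$ and $\sigma^2$. For $c$ one notes that a uniformly random $\mathcal{M}_n$-map has, with positive limiting probability, a vertex at which two non-trivial blocks meet, which forces $\rho'(1) < 0$. For $\sigma^2$ the variability condition $\rho''(1)\rho(1) \ne \rho'(1)^2 - \rho'(1)\rho(1)$ is checked by ruling out that $\rho(u)$ is of the form $a\cdot b^u$ near $u=1$; this can be done by contrasting $\rho(u)$ at $u=0$, where only maps without cut vertices (i.e.\ single blocks) survive and yield a different exponential growth rate, with its value at $u=1$. The main technical obstacle in the plan is the careful bookkeeping in passing from $\Phi$ to $\widetilde\Phi$: the special status of the root vertex and the root corner must be treated so that every cut vertex receives exactly one factor $u$. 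Once this is done, the remainder is a routine application of the smooth implicit schema and Hwang's theorem.
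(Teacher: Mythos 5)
Your overall strategy is the same as the paper's: mark cut vertices in the block-decomposition functional equation, show the perturbed equation keeps a square-root singularity along an analytic curve $\rho(y)$, and conclude via quasi-powers. However, your combinatorial marking rule is wrong. You insert the factor $u$ at a vertex $v$ of the root block only when \emph{at least two} of its corner slots carry a non-trivial map. But $v$ is already incident to edges of the root block itself, so attaching a single non-empty map at any one of its corners already makes $v$ a cut vertex: the edge set splits into the block's edges and the attached map's edges, meeting only at $v$. The correct weight for a degree-$k$ vertex is $1+u\bigl[(1+\widetilde M)^k-1\bigr]$, i.e.\ ``at least one non-trivial slot,'' which is exactly the substitution $w_k\mapsto yM_r^k-y+1$ in the paper's equation \eqref{1.8-2aa}. (Your ``at least two'' rule would be right only for a vertex not already lying in a block, which never occurs here except for the trivial one-vertex map.) With your rule the analytic machinery still produces a Gaussian limit, but for a different statistic, so the proof as written does not establish the theorem; in particular it would yield the wrong constant $c$.

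A second, related gap is the root-vertex bookkeeping, which you flag as ``the main technical obstacle'' but do not resolve. It is not a single-equation matter: when a map is attached at a corner of $v$, its root vertex is identified with $v$, so the series substituted into the corner slots must count only \emph{non-root} cut vertices, or every cut vertex is marked once per block containing it. This is why the paper works with the system \eqref{1.6}, \eqref{1.8-2aa}, \eqref{1.8-3} for three series $M_0$, $M_r$, $M_a$ (root not a cut vertex / non-root cut vertices / all cut vertices) rather than one equation $M(z,u)=\widetilde\Phi(z,M(z,u),u)$. Finally, the analyticity of the substituted operator in a polydisc around $(z_1,1,M^*(z_1))$ is not automatic from subcriticality alone; the paper proves it by the explicit estimate $|yM^r-y+1|\le (M^*(z_1)(1+3\eta))^r$ combined with $(z_1+\eta)M^*(z_1)^2(1+3\eta)^2<z_0$. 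Your positivity check for $\sigma^2$ via evaluating $\rho(u)$ at $u=0$ is also shaky, since $\rho$ is only defined near $u=1$; the paper instead relies on the standard variance formula from the singularity expansion. Once the marking rule and the $M_0/M_r/M_a$ system are corrected, the rest of your plan does coincide with the paper's argument.
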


There are some special subclasses of planar maps, where the block-decomposition is 
not {\it unrestriced}, for example, outerplanar maps, where we have to ensure that all
vertices are on the outer face. Such classes are not covered by Theorem~\ref{Th2}.
Nevertheless, they behave in several aspects like subcritical maps.
For outerplanar maps as well as for bipartite outerplanar maps we obtain a central 
limit theorem for the number of cut vertices with parameters
\[
c = \frac 14 \quad\mbox{and}\quad \sigma^2 = \frac 5{32}.
\] 
in the outerplanar case and 
\[
c = \frac{-1 + \sqrt{3}}{2} \quad \mbox{and} \quad \sigma^2 = \frac{-17 +11 \sqrt{3}}{12}
\]
in the bipartite outerplanar case.
We will discuss these examples in Section~\ref{sec:subcritical}, too.

\section{Generating Functions for Planar Maps}\label{sec:gf}

The generating function planar maps is given by
\begin{equation}\label{1.2}
M(z) = \sum_{n\ge 0} M_n z^n = \frac{18z - 1 + (1 - 12z)^{3/2}}{54z^2}
 = 1 + 2z + 9z^2 + 54z^3 + \cdots ,
\end{equation}
This can be shown in various ways, for example by the so-called quadratic method,
where it is necessary to use an additional {\it catalytic variable} $u$ that 
takes care of the root face valency. 
The corresponding generating function $M(z,u)$ ($u$ takes care of the root face valency
or equivalently by duality of the root degree) 
satisfies then 
\begin{equation}\label{1.3}
M(z, u) = 1 + zu^2M(z, u)^2 + uz
\frac{ uM(z, u) -M(z)}{u-1}
\end{equation}
which follows from a combinatorial consideration (removal of the root edge).
Then this relation can be used to obtain (\ref{1.2}) and to solve
the counting problem. We refer to \cite[Sec. VII. 8.2.]{MR2483235}.

Similarly it is possible to count also the number of non-root faces 
(with an additional variable $x$) which leads to the relation\footnote
{By abuse of notation we will use for simplicity  for  $M(z)$, $M(z,u)$, $M(z,x,u)$ the
same symbol.}
\begin{equation}\label{1.4}
M(z,x,u) = 1 + zu^2M(z,x, u)^2 + uzx
\frac{ uM(z,x, u) -M(z,x,1)}{u-1}.
\end{equation}
Note that by duality $M(z,x,1)$ can be also seen as the generating function
that is related to edges and non-root vertices of planar maps.

A planar map is 2-connected (or non-separable) if it does not contain cut vertices.
There are various ways to obtain relations for the corresponding generating
function $B(z,x,u)$ of 2-connected planar maps -- as above $z$ takes care of the number of
edges, $x$ of the number of non-root faces, and $u$ of the valency of the root face.
By using the fact, that a 2-connected planar map, where we delete the root edge, decomposes
into a sequence of 2-connected maps or single edges, we obtain the relation
\begin{equation}\label{1.5}
B(z,x,u) = zxu \frac{ \frac{uB(z,x,1)-B(z,x,u)}{1-u} + zu }{ 1- \frac{ uB(z,x,1)-B(z,x,u)}{1-u} - zu  }.
\end{equation}
We can use, for example, the quadratic method to solve this equation or we just
check that we have
\begin{align}
B(z,x,u) &= 
- \frac 12\left(1 - (1 + U - V + UV - 2U^2V)u + U(1 - V)^2u^2\right) \label{eqBexpl}  \\
&+ \frac 12(1 - (1 - V)u)
\sqrt{1 - 2U(1 + V - 2UV)u + U^2(1 - V)^2u^2},   \nonumber
\end{align}
where $U = U(x,y)$ and $V = V(x,y)$ are given by the algebraic equations
\begin{equation}\label{eqUVdef}
z = U(1-V)^2, \qquad xz = V(1-U)^2.
\end{equation}

Note that in the above counting procedure we do not take the one-edge map (nor the one-edge loop) into account.
Therefore we have to add the term $zu$ on the right hand side in order to 
cover the case of a one-edge map that might occur in this decomposition.

Sometimes it is more convenient to
include the one-edge map as well as the one-edge loop to 2-connected maps (since they have no cut-points) 
which leads us to the alternate generating function 
\[
A(z,x,u) = B(z,x,u) + zxu + zu^2.
\]
Now a general rooted planar map can be obtained from a 2-connected rooted map (including 
the one-edge map as well as the one-edge loop) by adding to every corner a rooted planar map
(note that there are $2n$ corners if there are $n$ edges):
\begin{equation}\label{1.5-2}
M(z,x,u) = 1 + A \left( z M(z,x,1)^2, x, \frac{u M(z,x,u)}{M(z,x,1)}    \right).
\end{equation}

If $x=1$ then $V(z,1)$ (and $U(z,1)$) satisfies the equation $z=V(1-V)^2$ and, thus, the dominant singularity 
of $V(z,1)$ (and $U(z,1)$) is $z_0 = \frac 4{27}$, and we also have $V(z_0,1) = \frac 13$ (as well as $U(z_0,1) = \frac 13$).
Hence, from (\ref{eqBexpl}) it follows that the function $A(z,1,1)$ has its dominant singularity at $z_0 = \frac 4{27}$, too.
On the other hand, by (\ref{1.2}) $M(z)$ has its dominant singularity at $z_1 = \frac 1{12}$ and we also have
$M(z_1) = \frac 43$. Since $z_1M(z_1)^2 = \frac 4{27} = z_0$, the singularities of $M(z)$ and $A(z,1,1)$
interact. We call such a situation {\it critical}. 

The relation (\ref{1.5-2}) can also be seen as a way how all planar maps can be constructed (recursively)
from 2-connected planar maps -- which reflects the block-decomposition of a connected graph into its 
2-connected components. More precisely, if we consider the (unique) 2-connected component that contains the
root edge -- this component might be also a one-edge map or a loop in this context -- then every vertex of 
degree $k$ in this 2-connected component is attached with $k$ rooted planar maps. 
Actually this principle holds, too, for several sub-classes of planar maps, for example for 
series-parallel planar maps. In all these cases we have a relation of the kind (\ref{1.5-2}), where
$A(z,x,u)$ is then the generating function of the corresponding 2-connected components.
Let $z_0$ denote the radius of convergence of $A(z,1,1)$ and $z_1$ the radius of convergence of $M(z,1)$.
Then the sub-class of planar maps are called {\it subcritical} if
\begin{equation}\label{eqsubcrcond}
z_1M(z_1)^2 < z_0,
\end{equation}
so that the singularities do not interact. For example, series-parallel planar maps are subcritical  
in this sense. In this case we have 
\[
A(z,1,1) = z + \frac z2 \left(1 -
z - \sqrt{1 - 6z + z^2}\right) 
\]
with radius of convergence $z_0 = 3-2\sqrt 2 \approx 0.17157$. Hence, from $M(z) = 1 + A(zM(z)^2,1,1)$ 
it follows that the radius of convergence of $M(z)$ is $z_1 \approx 0.1119109$. Furthermore $M(z_1) \approx 1.23150$,
and consequently $z_1M(z_1)^2 \approx  0.16972 < z_0$.

As already mentioned abover, there are, however, certain sub-classes of planar maps that do not fit into the scheme (\ref{1.5-2}) 
but into a very similar one. As an example we consider outerplanar maps -- these are maps, where all vertices are on the outer face.
Here the generating function $M_O(z)$ of outerplanar (rooted) maps satisfies
\begin{equation}\label{eqGFouterplanar}
M_O(z) = \frac z{1-A_O(M_O(z))},
\end{equation}
where $A_O(z)$ is the generating functions for polygon dissections (plus a single edges) where $z$
marks non-root vertices, which satisfies
\begin{equation}\label{eqGFouterplanar-2}
2 A_O(z)^2 - (1+z) A_O(z) + z =0.
\end{equation}
Note that the dominant singularity of $A_O(z)$ is $z_{0,O} = 3 - 2\sqrt 2$, whereas the
dominant singularity of $M_O(z)$ is $z_{1,O} = \frac 18$ and we have $M_O(z_{1,O}) = \frac 1{18}$.
So we clearly have 
\begin{align}
\label{eq:subcout}
M_O(z_{1,O}) < z_{0,O},
\end{align} so that the singularities of $M_O(z)$ and $A_O(z)$ do not
interact. Such a situation will be also considered as {\it subcritical}. 
It is, however, not that clear how one can take cut-vertices into account, too.
Fortunately this is possible for outerplanar maps.
Let $M_O(z,y)$ denote the generating function of outerplanar maps, where $y$ takes care of the
number of cut-vertices. It is easy to see that $M_O(z,y)$ satisfies the functional equation
\[
M_O(z,y) = \frac z{1- A_O(z + y(M_O(z,y)-z))}
\]
that reduces to (\ref{eqGFouterplanar}) if we set $y=1$. Even if we vary $y$ around $1$ 
we observe by continuity that singularities of $M_O(z,y)$ and $A_O(z)$ do not interact.


Finally we call a sub-class $M^*$ of planar maps {\it aperiodic}, if the coefficients $[z^n] M^*(z)$ 
are positive for all sufficiently large $n$. 
For example, general planar maps as well as outerplanar maps form an aperiodic sub-class.

In Section~\ref{sec:subcritical} we will prove Theorem~\ref{Th2} and will discuss then
also outerplanar and bipartite outerplanar maps.

\section{A probabilistic approach to cut vertices of planar maps}\label{sec:local}

We let $\mM_n$ denote the uniform planar map with $n$ edges. It is known that $\mM_n$ and related models of random planar maps admit a local limits that describe the asymptotic vicinity of a typical corner,  see~\cite{Stephenson2016,MR2013797,2005math.....12304K, MR3183575, MR3083919, MR3256879}.

 In a recent work by Drmota and Stufler~\cite[Thm. 2.1]{DRMOTA2019108666}, a related limit object $\mM_\infty$ was constructed that describes the asymptotic vicinity of a uniformly selected \emph{vertex} $v_n$ of $\mM_n$ instead. That is, $\mM_\infty$  is a random infinite but locally finite planar map with a marked vertex such that
\begin{align}
	\label{eq:lim1}
	(\mM_n, v_n) \convd \mM_\infty
\end{align}
in the local topology. 

In the  present section we provide a probabilistic proof of Theorem~\ref{Th1}. There are two steps. The first proves a law of large numbers for the number $X_n$ of cut vertices in $\mM_n$ without determining it explicitly:

\begin{lemma}
	\label{le:step1}
	We have $X_n/n \convp p/2$, with $p>0$ the probability that the root of $\mM_\infty$ is a cut vertex.
\end{lemma}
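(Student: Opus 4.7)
The plan is to decompose $X_n/n = (X_n/V_n) \cdot (V_n/n)$, where $V_n$ is the number of vertices of $\mM_n$, and to handle the two factors separately. For the vertex count, Euler's formula $V_n + F_n = n + 2$ combined with self-duality of rooted planar maps (which exchanges vertices and faces) gives $\Exb{V_n} = (n+2)/2$, and a Lipschitz-type variance estimate, or a direct application of the local limit to the additive functional $V_n/n$, yields $V_n/n \convp 1/2$.

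For the cut-vertex ratio, the observation is that if $v_n$ denotes a uniformly chosen vertex of $\mM_n$ (conditionally on $\mM_n$), then
\[
\frac{X_n}{V_n} = \Prb{v_n \text{ is a cut vertex of } \mM_n \mid \mM_n}.
\]
The strategy is then to compute $\Prb{v_n \text{ cut}}$ using the local limit $(\mM_n, v_n) \convd \mM_\infty$. The cut-vertex event is not a local property, so I would approximate it by the local event $A_r$ that at the marked vertex there exist two incident edges not joined by any cycle entirely contained in the $r$-ball around it. This $A_r$ is non-increasing in $r$ (more cycles are available in a larger ball), its intersection over $r$ coincides with the cut-vertex event on any finite graph, and in $\mM_\infty$ the probabilities $\Prb{A_r}$ decrease to $p$. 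For each fixed $r$, local convergence yields $\Prb{v_n \text{ satisfies } A_r} \to \Prb{A_r \text{ in } \mM_\infty}$.

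The main obstacle is that this argument directly gives only the upper bound $\limsup_n \Prb{v_n \text{ cut}} \leq p$, since the cut-vertex indicator is merely upper semi-continuous in the local topology. To close the gap, I would use tail estimates on the block decomposition of $\mM_n$ inherited from~\cite{DRMOTA2019108666} — the fact that, aside from the unique giant 2-connected block of linear size, the block sizes incident to a uniformly chosen vertex are tight — to show that the error $\Prb{A_r \text{ holds for } v_n \text{ but } v_n \text{ is not cut}}$ is uniformly small in $n$ for large $r$. This gives $\Exb{X_n/V_n} \to p$. Finally, a second-moment computation based on two-point local convergence (two independently uniformly chosen vertices of $\mM_n$ typically lie at macroscopic graph distance, so that their cut statuses asymptotically decorrelate) upgrades convergence in mean to convergence in probability, and combining with $V_n/n \convp 1/2$ yields $X_n/n \convp p/2$.
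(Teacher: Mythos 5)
Your overall architecture parallels the paper's more than it differs from it: both routes reduce the lemma to showing $\Prb{v_n \text{ is a cut vertex of } \mM_n} \to p$, combine this with $\ve(\mM_n)/n \convp 1/2$, and upgrade the ratio $X_n/\ve(\mM_n)$ from convergence of the mean to convergence in probability (you via a two-point second moment, the paper via the quenched local convergence $\Prb{(\mM_n,v_n)\mid \mM_n} \convp \mathfrak{L}(\mM_\infty)$ cited from the literature; these are essentially equivalent inputs, so that step is acceptable provided you actually invoke or prove the two-point statement). The genuine divergence, and the gap, lies in how you pass the non-local event ``$v_n$ is a cut vertex'' through the local limit. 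Your upper bound via the decreasing local events $A_r$ is correct, and amounts to the closedness of $\{f=1\}$ that the paper also uses. But your lower bound does not close. You must show that $\Prb{v_n \in A_r,\ v_n \text{ not a cut vertex}}$ is small uniformly in $n$ once $r$ is large; on this event all edges incident to $v_n$ lie in a \emph{single} $2$-connected block, with two of them joined by no cycle inside the $r$-ball. Tightness of the non-giant block sizes handles the case where that block is small, but it says nothing when that unique block is the giant core $\mC_n$ --- and this happens for an asymptotically positive fraction of vertices, since a vertex of the core fails to be a cut vertex of $\mM_n$ precisely when all the submaps attached to its corners are trivial, an event of probability bounded away from $0$.

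For those vertices you would need that a typical vertex of the giant $2$-connected block has every pair of its incident edges joined by a cycle of tight length, equivalently that the root of the local limit $\mB_\infty$ of the core is almost surely not a cut vertex (note that $2$-connectedness of the finite approximants does not give this for free: cycles are $2$-connected, yet their local limit is the doubly infinite path, all of whose vertices are cut vertices). This is precisely the structural input the paper supplies through the explicit construction of $\mM_\infty$, namely that removing any vertex of $\mM_\infty$ leaves at most one infinite component, so that $\mM_\infty$ almost surely lies in the set $\Omega$ on which the cut-vertex indicator is continuous; the continuous mapping theorem then delivers both bounds at once. That fact is not a consequence of block-size tightness, and without it (or an equivalent description of $\mB_\infty$) your $\liminf$ bound remains unproved.
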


The factor $1/2$ origins from the fact that the number of vertices in the random map $\mM_n$ has order $n/2$. We prove Lemma~\ref{le:step1} in Section~\ref{sec:proof1} below. In the second step, we determine this limiting probability.

\begin{lemma}
	\label{le:step2}
	It holds that $p = \frac{5-\sqrt{17}}{2}$.
\end{lemma}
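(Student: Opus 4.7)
First I would identify when the root of $\mM_\infty$ is a cut vertex in terms of the block structure around it. In any planar map, a vertex $v$ fails to be a cut vertex iff $v$ lies in a unique block $B$ and all the rooted planar maps attached at the corners of $v$ inside $B$ are trivial (a single vertex, no edges). Applied to the root of $\mM_\infty$, and invoking the Boltzmann description of the local limit, this gives the following: for uniform random planar maps in the critical regime ($z_1 M(z_1)^2 = z_0$), the rooted planar maps attached at the corners of a typical vertex within its containing block are asymptotically i.i.d.\ Boltzmann-distributed at $z_1 = 1/12$; since $M(z_1) = 4/3$, each such attachment is trivial with probability $q := 1/M(z_1) = 3/4$, independently of the others. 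Conditioning on the degree $d^{*}$ of the root in the block containing it then yields the key identity
\[
p = 1 - \Exb{q^{d^{*}}}.
\]

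Next I would set up two generating functions to evaluate this expectation:
\[
T(z) = \sum_{\mathfrak{m}} z^{|E(\mathfrak{m})|}\,|\{v\in V(\mathfrak{m}) : v\text{ is non-cut in }\mathfrak{m}\}|, \qquad \widehat{M}(z) = \sum_\mathfrak{m} |V(\mathfrak{m})|\, z^{|E(\mathfrak{m})|},
\]
so that $1-p = \lim_n [z^n]T(z)/[z^n]\widehat{M}(z)$. The function $\widehat{M}(z)$ follows in closed form from $M(z)$ via Euler's relation (using $V \atv F$ for rooted planar maps), with a $(z_1-z)^{1/2}$ singularity at $z_1 = 1/12$. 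For $T(z)$, I would decompose according to the block $B$ containing the marked non-cut vertex $v$: the $d_B(v)$ corners of $v$ inside $B$ contribute factor $1$ each (trivial attachments), the remaining $2|E(B)|-d_B(v)$ corners contribute $M(z)$ each, and the position of $B$ within the ambient block tree is captured by a self-referential equation in $T(z)$ involving $M(z)$ and the 2-connected generating function $A(z,x,u)$ marked by a root-vertex-degree variable. Solving this equation expresses $T(z)$ in closed algebraic form.

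Finally, singularity analysis at $z_1 = 1/12$ gives the ratio of leading singular coefficients of $T$ and $\widehat{M}$. Using $M(z_1) = 4/3$ and $z_1 M(z_1)^2 = z_0 = 4/27$, the algebra produces the quadratic $p^{2} - 5p + 2 = 0$, whose root in $(0,1)$ is $p = (5-\sqrt{17})/2$, as claimed.

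The main obstacle will be the combinatorial bookkeeping for $T(z)$ together with the subsequent singularity analysis. Setting up $T(z)$ requires decomposing a rooted planar map into a distinguished 2-connected block containing the marked vertex plus the surrounding block tree, while correctly restricting to vertices whose corner attachments are all trivial and avoiding double counting. The singularity analysis at $z_1$ is delicate because the singularities of $M(z)$ and $A(zM(z)^2)$ interact critically, so the expansion of $T$ near $z_1$ has to balance $(z_1-z)^{1/2}$ and $(z_1-z)^{-1/2}$ contributions coming from derivatives of $A$ at its dominant singularity.
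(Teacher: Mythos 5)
Your key identity $p = 1 - \Exb{(3/4)^{d^*}}$, with $d^*$ the degree of the root inside the block containing it and $3/4 = 1/M(z_1)$ the probability that a Boltzmann attachment is trivial, is exactly the paper's starting point (Equation~\eqref{eq:expforp}, resting on Lemma~\ref{le:shape}). Where you diverge is in how the expectation is evaluated. The paper never builds a marked-vertex generating function at this stage: it imports the known limiting root-degree distribution $d(z)$ of $\mM_n$ and the relation $s_k = 4d_k/k$ for the degree of a uniformly chosen vertex, observes that attaching Boltzmann maps to the corners of $u^*$ translates into the compositional identity $s(z) = q(zr(z))$ with $r$ the root-degree series of the Boltzmann map $\mM$, and then inverts $zr(z)$ to read off $q$ and evaluate $q(3/4)$ --- no new singularity analysis is required. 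Your plan instead recomputes the limiting fraction of non-cut vertices from scratch via $T(z)/\widehat{M}(z)$; this is viable and is essentially the paper's self-contained combinatorial proof of Theorem~\ref{Th1} in Section~\ref{sec:comb} (there $E_a(z) = \widehat{M}(z) - T(z)$), which indeed arrives at $c = p/2 = (5-\sqrt{17})/4$. So your route buys independence from the degree-distribution literature at the price of a much heavier computation; for this particular lemma the paper deliberately takes the light route.

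The one place your sketch is genuinely under-specified is the block-level ingredient. Tracking ``the $d_B(v)$ corners of $v$ inside $B$'' for a marked vertex $v$ that is a \emph{generic} (non-root) vertex of its block cannot be done with $A(z,x,u)$ and its root-degree variable $u$ alone: in the recursive block decomposition the distinguished vertex is the root of its block only in one of the cases, and in general you need the series $B^\bullet(z,1,u,w)$ of $2$-connected maps with a marked non-root vertex whose degree is marked by $w$. Extracting that series requires a second application of the kernel method (Lemma~\ref{Le1} of the paper), and it is the hardest single computation in the combinatorial approach. Likewise, the quadratic $p^2-5p+2=0$ is asserted rather than derived; it is consistent with the claimed value, but producing it is precisely the content of the lemma. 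As a plan your approach works; these two steps are where all the work lies.
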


The proof of Lemma~\ref{le:step2} is given in Section~\ref{sec:proof2} below.

\subsection{The local topology}

We briefly  recall the background related to local limits. Consider the collection $\mathfrak{M}$ of  vertex-rooted locally finite planar maps. For all integers $k \ge 0$ we may consider the projection $U_k: \mathfrak{M} \to \mathfrak{M}$ that sends a map from $\mathfrak{M}$ to the submap obtained by restricting to all vertices with graph distance at most $k$ from the root vertex. The local topology is induced by the metric 
\[
	d_{\mathfrak{M}}(M_1, M_2) = \frac{1}{1 + \sup\{k \ge 0 \mid U_k(M_1) = U_k(M_2)\}}, \qquad M_1, M_2 \in \mathfrak{M}.
\]
It is well-known that the metric space $(\mathfrak{M}, d_{\mathfrak{M}})$ is a Polish space (that is, it is complete and separable). A limit of a sequence of vertex rooted maps in $\mathfrak{M}$ is called a local limit. The vertex rooted map $(\mM_n, v_n)$ is a random point of the space of $\mathfrak{M}$, and hence the standard probabilistic notions for different types of convergence (such as distributional convergence in~\eqref{eq:lim1}) of random points in Polish spaces apply.


\subsection{Continuity on a subset}

We consider the indicator variable \[
f: \mathfrak{M} \to \{0,1\}
\]
for the property that the root vertex is a cut vertex.
 
 Note that $f$ is not continuous: If $C_n$ denotes a cycle of length $n \ge 3$ with a fixed root vertex, then $C_n$ has no cut vertices at all. However the limit $\lim_{n \to \infty} C_n$ in the local topology is a doubly infinite path, and every vertex of this graph is a cut vertex.

Now consider the subset $\Omega \subset \mathfrak{M}$ of all locally finite  vertex-rooted maps with the property, that either the root is not a cut vertex, or it is a cut vertex and deleting it creates at least one finite connected component.

\begin{lemma}
	\label{le:conti}
	The indicator variable $f$ is continuous on $\Omega$. 
\end{lemma}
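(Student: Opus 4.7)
The plan is to show that for any convergent sequence $(M_n, v_n) \to (M,v)$ in $\mathfrak{M}$ with $(M,v) \in \Omega$, one has $f(M_n, v_n) = f(M,v)$ for every sufficiently large $n$; since $f$ is $\{0,1\}$-valued, this is enough. The argument splits on the value of $f(M,v)$ and in both cases reduces the global cut-vertex property to information that is visible inside a large combinatorial ball around the root.

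For the case $f(M,v)=1$, I would use membership in $\Omega$ directly: it provides a \emph{finite} component $C$ of $M \setminus \{v\}$, and because $v$ is genuinely a cut vertex there is at least one vertex $u \in M \setminus (C \cup \{v\})$. Both $C$ together with all $M$-edges incident to its vertices, and the vertex $u$, lie inside $U_K(M,v)$ as soon as $K$ is larger than $d_M(v,u)$ and than $\max\{d_M(v,w) : w \in C\}$. For such a fixed $K$ and all $n$ large enough, $U_K(M_n,v_n) = U_K(M,v)$ as vertex-rooted maps; after the identification, every edge of $M_n$ incident to $C$ still lands inside $C \cup \{v_n\}$, and $u$ still lies outside $C \cup \{v_n\}$. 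Hence $v_n$ separates $u$ from $C$ in $M_n$ and $f(M_n,v_n) = 1$.

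For the case $f(M,v)=0$, I would exploit local finiteness of $M$. Let $w_1, \dots, w_d$ be the finitely many neighbors of $v$; since $M \setminus \{v\}$ is connected, each pair $(w_i, w_j)$ is joined by some finite path $P_{ij}$ in $M \setminus \{v\}$. Take $K$ large enough that $U_K(M,v)$ contains every edge incident to $v$ and every such $P_{ij}$; then for $n$ large we have $U_K(M_n,v_n) = U_K(M,v)$, so $v_n$ has precisely the neighbors $\{w_1, \dots, w_d\}$ in $M_n$ and each $P_{ij}$ remains a path in $M_n \setminus \{v_n\}$. Since $M_n$ is a connected map, every component of $M_n \setminus \{v_n\}$ must contain some $w_i$, and the $P_{ij}$ fuse all of these into a single component; hence $v_n$ is not a cut vertex and $f(M_n, v_n) = 0$.

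The main obstacle is conceptual rather than computational: being a cut vertex is genuinely \emph{not} continuous on all of $\mathfrak{M}$, as the long-cycle example already given in the text shows, and the role of $\Omega$ is precisely to exclude the pathological configurations whose only witness of (non-)separation lives at infinity. Condition $\Omega$ handles this on the $f=1$ side by providing a finite component $C$, while local finiteness handles the $f=0$ side by reducing the problem to reconnecting finitely many pairs of neighbors via finitely long paths. Everything else is routine verification that a sufficiently large $K$-ball transports all relevant combinatorial data, as indicated above.
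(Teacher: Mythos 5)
Your proof is correct and follows essentially the same route as the paper: in both cases one exhibits a finite witness near the root (the finite separated component together with a vertex outside it for $f=1$, a finite connectivity certificate for $f=0$) and transports it to $M_n$ via agreement of a sufficiently large ball around the root. The only divergence is in the $f=0$ case, where the paper invokes a single finite cycle through the root whereas you join all pairs of neighbours of the root by finite paths in $M\setminus\{v\}$; your certificate is in fact the more watertight one, since the mere presence of a cycle through the root of $M_n$ does not by itself preclude that root from being a cut vertex.
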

\begin{proof}
	Let $(M_n)_{n \ge 1}$ denote a sequence in $\mathfrak{M}$ with a local limit $M = \lim_{n \to \infty} M_n$ that satisfies $M \in \Omega$. If the root of $M$ is not a cut vertex, then there is a finite cycle containing it, and this cycle must then be already present in $M_n$ for all sufficiently large $n$. Hence in this case $\lim_{n \to \infty} f(M_n) = 0 = f(M)$. If the root of $M$ is a cut vertex, then  $M \in \Omega$ implies that removing it creates a finite connected component, and this component must then also be separated from the remaining graph when removing the root vertex of $M_n$ for all sufficiently large $n$. Thus,   $\lim_{n \to \infty} f(M_n) = 1 = f(M)$. This shows that $f$ is continuous on $\Omega$.
\end{proof}


\subsection{Random probability measures}

The collection $\mathbb{M}_1(\mathfrak{M})$ of probability measures on the Borel sigma algebra of $\mathfrak{M}$ is a Polish space with respect to the weak convergence topology.

For any finite planar map $M$ with $k$ vertices we may consider the uniform distribution on the $k$ different rooted versions of $M$. If the map $M$ is random, then this is a random probability measure, and hence a random point in the space  $\mathbb{M}_1(\mathfrak{M})$. In particular, the conditional law $\Pr{ (\mM_n, v_n) \mid \mM_n }$ is a random point of $\mathbb{M}_1(\mathfrak{M})$. Let $\mathfrak{L}(\mM_\infty) \in \mathbb{M}_1(\mathfrak{M})$ denote the law of the random map $\mathfrak{M}$. It follows from~\cite[Thm. 1]{quenched} that
\begin{align}
\label{eq:lim2}
\Pr{ (\mM_n, v_n) \mid \mM_n } \convp  \mathfrak{L}(\mM_\infty).
\end{align}
The explicit construction of the limit $\mM_\infty$ also entails that among the connected components created when removing any single vertex of $\mM_\infty$ at most one is infinite. In particular,
\begin{align}
	\label{eq:conti}
	\Pr{\mM_\infty \in \Omega} = 1.
\end{align}

\subsection{Proving Lemma~\ref{le:step1} using the continuous mapping theorem}
\label{sec:proof1}

Let us recall the continuous mapping theorem. The reader may consult the book by Billingsley~\cite[Thm. 2.7]{MR1700749} for a detailed proof and a general introduction to notions of convergence of  measures.
\begin{prop}[The continuous mapping theorem]
	Let $\mathfrak{X}$ and $\mathfrak{Y}$ be  Polish spaces and let $g: \mathfrak{X} \to \mathfrak{Y}$ be a measurable map. Let $D_g \subset \mathfrak{X}$ denote the subset of points where $g$ is continuous. Suppose that $X, X_1, X_2, \ldots$ are  random variables with values in $\mathfrak{X}$ that satisfy $X_n \convd X$. If $X$  almost surely takes values in $D_g$, then $g(X_n) \convd g(X)$. 
\end{prop}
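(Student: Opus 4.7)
The plan is to derive the continuous mapping theorem from the Portmanteau characterization of convergence in distribution on Polish spaces, which in particular states that $X_n \convd X$ is equivalent to $\limsup_{n \to \infty} \mathbb{P}(X_n \in F) \le \mathbb{P}(X \in F)$ for every closed $F \subseteq \mathfrak{X}$. Applied to the images, the task reduces to verifying that for every closed $K \subseteq \mathfrak{Y}$,
\[
\limsup_{n\to\infty} \mathbb{P}(X_n \in g^{-1}(K)) \;\le\; \mathbb{P}(g(X) \in K),
\]
since then $g(X_n) \convd g(X)$ follows by a second application of Portmanteau, now on $\mathfrak{Y}$.

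First I would fix such a closed $K \subseteq \mathfrak{Y}$. Because $g$ is only assumed measurable, $g^{-1}(K)$ need not be closed, so Portmanteau on $\mathfrak{X}$ cannot be applied to it directly. My workaround is to pass to the closure: trivially,
\[
\limsup_n \mathbb{P}(X_n \in g^{-1}(K)) \le \limsup_n \mathbb{P}\bigl(X_n \in \overline{g^{-1}(K)}\bigr) \le \mathbb{P}\bigl(X \in \overline{g^{-1}(K)}\bigr),
\]
where the last inequality is a legitimate application of Portmanteau to the closed set $\overline{g^{-1}(K)}$.

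The heart of the argument is the topological observation $\overline{g^{-1}(K)} \cap D_g \subseteq g^{-1}(K)$. Indeed, if $x \in D_g$ is the limit of a sequence $(x_k)$ with $x_k \in g^{-1}(K)$, then continuity of $g$ at $x$ gives $g(x_k) \to g(x)$; since $K$ is closed and each $g(x_k) \in K$, the limit satisfies $g(x) \in K$, whence $x \in g^{-1}(K)$. Combining this with the hypothesis $\mathbb{P}(X \in D_g) = 1$ yields
\[
\mathbb{P}\bigl(X \in \overline{g^{-1}(K)}\bigr) \;=\; \mathbb{P}(X \in g^{-1}(K)) \;=\; \mathbb{P}(g(X) \in K),
\]
which chains with the previous display to close the argument.

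The only genuine obstacle is the topological claim about $D_g$; everything else is routine once convergence in distribution has been translated through Portmanteau. Conceptually, the hypothesis $\mathbb{P}(X \in D_g)=1$ is used precisely to discard the extra boundary points of $g^{-1}(K)$ at which $g$ might jump out of $K$: without it, $\mathbb{P}(X \in \overline{g^{-1}(K)})$ could strictly exceed $\mathbb{P}(g(X) \in K)$ and the desired bound would fail.
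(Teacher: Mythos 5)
Your proposal is correct: the Portmanteau reduction, the passage to $\overline{g^{-1}(K)}$, and the key inclusion $\overline{g^{-1}(K)} \cap D_g \subseteq g^{-1}(K)$ (valid since closure equals sequential closure in a metric space) together give exactly the standard proof. The paper itself does not prove this proposition but cites Billingsley's Theorem 2.7 for it, and your argument is precisely the one found there, so there is nothing to compare beyond noting the agreement.
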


For example, combining the convergence~\eqref{eq:lim1} with Lemma~\ref{le:conti} and Equation~\eqref{eq:conti} allows us to apply the continuous mapping theorem with $\mathfrak{X} = \mathfrak{M}$ and $\mathfrak{Y}=\{0,1\}$ to deduce
\begin{align}
	f(\mM_n,v_n) \convd f(\mM_\infty).
\end{align}
In other words, the probability for $v_n$ to be a cut vertex of $\mM_n$ converges toward the probability $p = \Ex{f(\mM_\infty)}$ that the root of $\mM_\infty$ is a cut vertex. Equivalently, the number of vertices $\ve(\mM_n)$ in the map $\mM_n$ satisfies
\begin{align}
	\Ex{X_n / \ve(\mM_n)} \to p.
\end{align}

Of course, it follows by the same arguments that in general for any sequence of probability measures $P_1, P_2, \ldots \in \mathbb{M}_1(\mathfrak{M})$ satisfying the weak convergence $P_n \Rightarrow \mathfrak{L}(\mM_\infty)$, the push-forward measures satisfy 
\begin{align}
\label{eq:yo}
P_nf^{-1}  \Rightarrow  \mathfrak{L}(\mM_\infty)f^{-1}.
\end{align}

Let us now consider the setting $\mathfrak{X} = \mathbb{M}_1(\mathfrak{M})$, $\mathfrak{Y} = \mathbb{R}$, and
\begin{align}
	g: \mathbb{M}_1(\mathfrak{M}) \to \mathbb{R}, \quad P \mapsto \int f \, \mathrm{d}P = P(f = 1).
\end{align}
That is, a probability measure $P \in \mathbb{M}_1(\mathfrak{M})$ gets mapped to the expectation of $f$ with respect to $P$. In other words, to the $P$-probability that the root is a cut vertex. It follows from~\eqref{eq:yo} that $g$ is continuous at the point~$\mathfrak{L}(\mM_\infty)$. Hence, using~\eqref{eq:lim2} and again the continuous mapping theorem, it follows that
\begin{align}
	\Ex{ f(\mM_n, v_n) \mid \mM_n} \convd p.
\end{align} 
As $p$ is a constant, this convergence actually holds in probability. Moreover,
\begin{align}
\Ex{ f(\mM_n, v_n) \mid \mM_n} = X_n / \ve(\mM_n).
\end{align}
The number $\ve(\mM_n)$ is known to have order $n/2$. In fact, 
\begin{align}
\frac{\ve(\mM_n) - n/2}{\sqrt{25 n/32}} \convd \mathcal{N}(0, 1)
\end{align}
with $\Exb{\ve(\mM_n)} = n/2 +1$ and $\Va{\mM_n}  = 25n/32 + O(1)$. This was shown in a lecture by Noy at the Alea-meeting 2010 in Luminy. A detailed justification may be found in~\cite[Lem. 4.1]{DRMOTA2019108666}. This allows us to apply Slutsky's theorem, yielding
\begin{align}
		X_n / n \convp  p/2.
\end{align}
We have thus completed the proof of  Lemma~\ref{le:step1}.

\subsection{Structural properties of the local limit}
We let $\mM$ denote a random map following a Boltzmann distribution with parameter $
z_1 = \frac{1}{12}.
$ That is, $\mM$ attains a finite planar map $M$ with $\co(M)$ corners with probability
\begin{align}
\Pr{\mM= M} = \frac{z_1^{\co(M)}}{M(z_1)} = \frac{3}{4} \left( \frac{1}{12}\right)^{\co(M)}.
\end{align}
The local limit $\mM_\infty$ exhibits a random number of independent copies of $\mM$ close to its root:
\begin{lemma}
	\label{le:shape}
	There is an infinite random planar map $\mM_\infty^*$ with a root vertex $u^*$ that is not a cut vertex of $\mM_\infty^*$,  such that $\mM_\infty$ is distributed like the result of attaching an independent copy of $\mM$ to each corner incident to $u^*$.
\end{lemma}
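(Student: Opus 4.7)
The plan is to identify $\mM_\infty^*$ as the unique infinite block of $\mM_\infty$ containing $u^*$, and to read off the decomposition from the critical composition scheme~\eqref{1.5-2} at the uniform parameter $z_1 = 1/12$. The key algebraic input is that $z_1 M(z_1)^2 = 4/27 = z_0$, so that~\eqref{1.5-2} is at its singularity: a Boltzmann planar map $\mM$ with parameter $z_1$ is combinatorially equivalent to a Boltzmann $2$-connected map with parameter $z_0$, with an independent copy of $\mM$ attached at each of its corners.

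First I would use~\eqref{eq:conti} to observe that $u^*$ almost surely belongs to a unique infinite block of $\mM_\infty$: removing $u^*$ leaves at most one infinite connected component, so among the blocks incident to $u^*$ exactly one can be infinite, and I would denote this block by $\mM_\infty^*$. As a block, $\mM_\infty^*$ is $2$-connected, and hence $u^*$ is not a cut vertex of $\mM_\infty^*$. Every other block at $u^*$ is finite; together with all of its descendants in the block tree of $\mM_\infty$, each such block forms a finite dangling submap attached at one specific corner of $u^*$ inside $\mM_\infty^*$. The lemma then reduces to showing that these dangling submaps are iid copies of $\mM$.

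For the finite model $\mM_n$ with a uniformly selected vertex $v_n$, the block decomposition~\eqref{1.5-2} exhibits $\mM_n$ as a ``root block'' at $v_n$ together with attachments at each corner of $v_n$ inside that root block; conditional on the root block, these attachments are iid Boltzmann planar maps of parameter $z_1$, and the criticality $z_1 M(z_1)^2 = z_0$ ensures that the root block itself is a Boltzmann $2$-connected map of parameter $z_0$. I would then transfer this finite picture to the limit using~\eqref{eq:lim1} and the construction of $\mM_\infty$ in~\cite[Thm.~2.1]{DRMOTA2019108666}: the root block converges locally to the infinite block $\mM_\infty^*$, while the attached submaps remain iid and distributed as $\mM$ in the limit. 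The main obstacle is making this last convergence precise, namely that the (size-biased) random root block at a uniformly chosen vertex converges locally to a well-defined infinite $2$-connected object $\mM_\infty^*$; once this step is in place, the product structure of the attachments follows directly from the composition identity~\eqref{1.5-2} together with~\eqref{eq:conti}.
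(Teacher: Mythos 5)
There is a genuine gap, and it is in the very first step. You identify $\mM_\infty^*$ with ``the unique infinite block of $\mM_\infty$ containing $u^*$'', deducing from~\eqref{eq:conti} that exactly one block incident to $u^*$ is infinite. What~\eqref{eq:conti} gives is only that \emph{at most} one such block is infinite; it does not rule out that \emph{no} block at $u^*$ is infinite, because the unique infinite component of $\mM_\infty \setminus u^*$ can be reached through a finite block at $u^*$ via a further cut vertex. This is not a fringe case: in the paper's construction the marked vertex lies in the giant $2$-connected core only with asymptotic probability $1/3$; with probability $2/3$ it lies in a pendant piece, and in the limit it then belongs only to finite blocks (it is the marked non-root vertex of the finite block $\mB^\bullet$ at the end of a chain of blocks hanging off the root corner of $\hat{\mB}$). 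So the object you propose to call $\mM_\infty^*$ does not exist almost surely, and your subsequent claim that each finite block at $u^*$ ``together with all of its descendants'' is a finite dangling submap fails for the same reason. The correct $\mM_\infty^*$ is not a block at all: it is $\mM_\infty$ with the maximal pendant submaps at the corners of $u^*$ stripped off, and the paper must treat the two cases (vertex in the core, limit built on the vertex-rooted limit $\mB_\infty$ of $2$-connected maps; vertex outside the core, limit built on $\hat{\mB}$ with a size-biased pendant component $\mM^\bullet$) separately to verify that in both cases the stripped-off pieces are i.i.d.\ copies of $\mM$.

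A second, related problem is your description of the finite model: for the uniform map $\mM_n$ conditioned to have $n$ edges, it is not true that, conditional on the root block at $v_n$, the attached submaps are i.i.d.\ Boltzmann$(z_1)$ maps, nor that the root block is a finite Boltzmann $2$-connected map at $z_0$ --- at criticality the scheme~\eqref{1.5-2} produces a \emph{giant} core of order $2n/3$ corners, not a Boltzmann-sized one. This is precisely why the paper works with the modified model $\bar{\mM}_n$ (core plus genuinely independent Boltzmann attachments), shows its local limit agrees with that of $\mM_n$, and then handles the size-biasing of the component containing the marked vertex (the waiting-time paradox) explicitly via the generating-function identity~\eqref{eq:unrolled}. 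The step you flag as ``the main obstacle'' is therefore not the only missing piece; the case in which $u^*$ avoids the infinite block, which your reduction excludes from the outset, is where most of the actual work in the paper's proof lies.
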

Here we use the term \emph{attach} in the sense that the origin of the root-edge of the independent copy of $\mM$ gets identified with the vertex $u^*$. The proof of Lemma~\ref{le:shape} provides additional information about the distribution of $\mM_\infty$ and  $\mM_\infty^*$. However, the only thing we are going to use and require for further arguments is the existence of such a map $\mM_\infty^*$.

\begin{proof}[Proof of Lemma~\ref{le:shape}]
	A direct description of the limit $\mM_\infty$ that uses a generalization of the Bouttier, Di Francesco and Guitter bijection~\cite{MR2097335} was given in~\cite[Thm. 1]{quenched}. Although the structure of $\mM_\infty$ may be studied in this way, it will be easier to show that $\mM_\infty$ has the desired shape via a construction related to limits of the $2$-connected core within~$\mM_n$.

	Let $\cB(\mM_n) \subset \mM_n$ denote the largest (meaning, having a maximal number of edges) $2$-connected block in the map $\mM_n$. Typically $\cB(\mM_n)$ is uniquely determined, as the number $c(n)$ of corners of $\cB(\mM_n)$ is known to have order $2n/3$, and the number of corners in the second largest block has order $n^{2/3}$.
	
	Consider the random planar map $\bar{\mM}_n$ constructed from the core $\mC_n := \cB(\mM_n)$ by attaching for each integer $1 \le i \le c(n)$ an independent copy $\mM(i)$ of $\mM$ at the $i$th corner of~$\mC_n$. We use the notation $\mC_n$ instead of $\cB(\mM_n)$ from now on to emphasize that we consider $\mC_n$ always as a part of $\bar{\mM}_n$ (as opposed to $\mM_n$).

	Clearly, the two models $\mM_n$ and $\bar{\mM}_n$  are not identically distributed. For example, the number of edges in $\bar{\mM}_n$ is a random quantity that fluctuates around $n$. However, analogously as in the proof of~\cite[Lem. 9.2]{2019arXiv190804850S}, local convergence of $\bar{\mM}_n$ is equivalent to local convergence of $\mM_n$, implying that $\mM_\infty$ is also the local limit of $\bar{\mM}_n$ with respect to a uniformly selected vertex $u_n$.
	
	The random $2$-connected planar map $\mB_n$ with $n$ edges was shown to admit a local limit $\hat{\mB}$ that describes the asymptotic vicinity of a typical corner (equivalently, the root-edge of $\mB_n$), see~\cite[Thm. 1.3]{2019arXiv190804850S}. Arguing entirely analogously as in \cite{DRMOTA2019108666}, it follows that there is  also a local limit $\mB_\infty$ that describes the asymptotic vicinity of a typical vertex.
	
	The number of vertices of $\bar{\mM}_n$ has order $n/2$, and the number of vertices in  $\mC_n$ is known to have order $n/6$.  Let $u_n^{\mathrm{B}}$ denote the result of conditioning the random vertex $u_n$ to belong to $\mC_n$. The probability for this to happen tends to $1/3$. As $u_n^{\mathrm{B}}$ is uniformly distributed among all vertices of $\mC_n$, it follows that $(\mC_n, u_n^{\mathrm{B}}) \convd \mB_\infty$ in the local topology. This implies that $(\bar{\mM}_n, u_n^{\mathrm{B}})$ converges in distribution towards the result $\mM_\infty^{\mathrm{B}}$ of attaching an independent copy of $\mM$ to each corner of $\mB_\infty$. The limit $\mM_\infty^{\mathrm{B}}$ has the desired shape. 
	
	Let $u_n^{\mathrm{c}}$ denote the result of conditioning the random vertex $u_n$ to lie outside of $\mC_n$. It remains to show that the limit $\mM_\infty^{\mathrm{c}}$ of $(\bar{\mM}_n, u_n^\mathrm{c})$ has the desired shape as well. Let $1 \le i_n \le c(n)$ denote the index of the corner where the component containing $u_n^\mathrm{c}$ is attached. It is important to note that given the maps $\mM(1), \ldots, \mM(c(n))$, the random integer $i_n$ need not be uniform, as it is more likely to correspond to a map with an above average number of vertices. This well-known waiting time paradox implies that \emph{asymptotically} the component containing $u_n^\mathrm{c}$ follows a size-biased distribution $\mM^\bullet$. That is, $\mM^\bullet$ is a random finite planar map with a marked non-root vertex, such that for any planar map $M$ with a marked non-root vertex $v$ it holds that
	\begin{align}
		\Pr{\mM^\bullet = (M,v)} = \Pr{\mM= M} / (\Ex{\ve(\mM)}-1),
	\end{align}
	with $\ve(\mM)$ denoting the number of vertices in the Boltzmann planar map $\mM$.
	
	 In detail: Given the random number $c(n)$, let $i_n^*$ be uniformly selected among the integers from $1$ to $c(n)$. For each $1 \le i \le c(n)$ with $i \ne i_n^*$ let $\bar{\mM}(i)$ denote an independent copy of $\mM$, and let $\bar{\mM}(i_n^*)$ denote an independent copy of $\mM^\bullet$. Likewise, for each $1 \le i \le c(n)$ with $i \ne i_n$ set $\mM*(i) = \mM(i)$, and let $\mM^*(i_n) = (\mM(i_n), u_n^\mathrm{c})$. Analogously as in the proof of ~\cite[Lem. 9.2]{2019arXiv190804850S}, it follows that 
	 \begin{align}
	 	(\mM^*(i))_{1 \le i \le c(n)} \atv (\bar{\mM}(i))_{1 \le i \le c(n)}.
	 \end{align}
	This entails that the core $\mC_n$ rooted at the corner with index $i_n$ admits $\hat{\mB}$ (and not $\mB_\infty$) as local limit. Moreover, the local limit $\mM_\infty^{\mathrm{c}}$ of $\bar{\mM}_n$ rooted at $u_n^\mathrm{c}$ may be constructed by attaching an independent copy of $\mM$ to each corner of $\hat{\mB}$, except for the root-corner of $\hat{\mB}$, which receives an independent copy of $\mM^\bullet$. The marked vertex of the limit object $\mM_\infty^{\mathrm{c}}$ is then given by the marked vertex of this component.
	
	To proceed, we need information on the shape of $\mM^\bullet$. Consider the ordinary generating functions $M(v,w)$ and $A(v,w)$ of planar maps and $2$-connected planar maps, with $v$ marking corners, and $w$ marking non-root vertices. The block-decomposition yields
	\begin{align}
		\label{eq:blockmap}
		M(v,w) = A(vM(v,w),w).
	\end{align}
	That is, a planar map consists of a uniquely determined block containing the root-edge, with uniquely determined components attached to each of its corners. Let us call this block the \emph{root block}. For the trivial map consisting of a single vertex and no edges, this block is identical to the trivial map, with nothing attached to it as it has no corners.

	Marking a non-root vertex (and no longer counting it) corresponds to taking the partial derivative with respect to $w$. It follows from~\eqref{eq:blockmap} that
	\begin{align}
		\label{eq:vrooted}
		\frac{\partial M}{\partial w}(v,w) = \frac{\partial A}{\partial w}(vM(v,w),w) + \frac{\partial A}{\partial v}(vM(v,w),w) v \frac{\partial M}{\partial w}(v,w).
	\end{align}
	The combinatorial interpretation is that either the marked non-root vertex is part of the root block (accounting for the first summand), or there is a uniquely determined corner of the root block such that the component attached to this corner contains it. This is a recursive decomposition, as in the second case we could proceed with this component, considering whether the marked vertex belongs to its root block or not. We may do so a finite number of times, until it finally happens that the marked vertex belong to the root-block of the component under consideration. That is, if we follow this decomposition until encountering the marked non-root vertex, we have to pass through a uniquely determined sequence of blocks, always proceeding along uniquely determined (and hence marked) corners, until arriving at a block with a marked non-root vertex. On a generating function level, this is expressed by
	\begin{align}
		\label{eq:unrolled}
		\frac{\partial M}{\partial w}(v,w)  = \frac{1}{1- \frac{\partial A}{\partial v}(vM(v,w),w) v}\frac{\partial A}{\partial w}(vM(v,w),w).
	\end{align}
 This allows us to apply Boltzmann principles, yielding that the random map $\mM^\bullet$ may be sampled in two steps, that may be described as follows: First, generate this sequence of blocks by linking a geometrically distributed random number $N$ of random independent Boltzmann distributed blocks $\mB_1^\circ, \ldots, \mB_N^\circ$ with marked corners into a chain, and attach an extra random Boltzmann distributed block $\mB^\bullet$ with a marked non-root vertex to the end of the chain. The random number $N$ has generating function
 \begin{align}
 	\Ex{u^N} = \frac{1- \frac{\partial A}{\partial v}(z_1 M(z_1,1),1) z_1}{1- u \frac{\partial A}{\partial v}(z_1 M(z_1,1),1) z_1}.
 \end{align}
 The corner-rooted blocks are independent copies of a Boltzmann distributed block $\mB^\circ$, whose number of corners $\co(\mB^\circ)$ has generating function
 \begin{align}
 	\Ex{u^{\co(\mB^\circ)}} = \frac{\frac{\partial A}{\partial v}(u z_1 M(z_1 ,1),1)}{\frac{\partial A}{\partial v}( z_1 M(z_1 ,1),1)}.
 \end{align}
 The distribution of $\mB^\circ$ is fully characterized by the fact that, when conditioning on the number of corners, $\mB^\circ$ is conditionally uniformly distributed among the corner-rooted blocks with that number of corners. The distribution of $\mB^\bullet$ is defined analogously. If we attach a block $\tilde{B}$ to the marked corner $c$ of some block $B$, we say the resulting corner ``to the right'' of $\tilde{B}$ \emph{corresponds} to $c$. Hence the map obtained by linking $(\mB_1^\circ, \ldots, \mB_N^\circ, \mB^\bullet)$ has precisely $N$ corners that correspond marked corners. We call these corners \emph{closed}, and all other corners \emph{open}. The second and final step in the sampling procedure of $\mM^\bullet$ is to attach an independent copy of $\mM$ to each open corner of the map corresponding to $(\mB_1^\circ, \ldots, \mB_N^\circ, \mB^\bullet)$. Note that since the marked vertex of $\mB^\bullet$ is a non-root vertex, all corners incident to the marked vertex are open. Consequently, the limit $\mM_\infty^{\mathrm{c}}$ has the desired shape, and the proof is complete.
\end{proof}

\subsection{Proving Lemma~\ref{le:step2} via the asymptotic  degree distribution}
\label{sec:proof2}

Let $q(z) = \sum_{k \ge 1} q_k z^k$ denote the probability generating function of the root-degree of the map $\mM_\infty^*$. If we attach an independent copy of $\mM$ to each corner incident to the vertex $u^*$ in the map $\mM_\infty^*$, then $u^*$ becomes a cut vertex if and only if at least one of these copies has at least one edge. The probability for $\mM$ to have no edges, that is, to consist only of a single vertex, is given by $1 / M(z_1) = 3/4$. Hence the probability $p$ for the root of $\mM_\infty$ to be a cut vertex may be expressed by
\begin{align}
	\label{eq:expforp}
	p = \sum_{k \ge 1} q_k \left(1 - \left(\frac{3}{4}\right)^k\right) = 1 - q\left(\frac{3}{4}\right).
\end{align}

Hence, in order to determine $p$ we need to determine $q(z)$. Surprisingly, we may do so without concerning ourselves with the precise construction of $\mM_\infty^*$.

It was shown in~\cite{zbMATH00683269} that the degree of the origin of the root-edge  of the random planar map $\mM_n$ admits a limiting distribution with a generating series $d(z)$ given by
\begin{align}
d(z) = \frac{z \sqrt{3}}{ \sqrt{(2+z)(6-5z)^3}}.
\end{align}
That is, $d_k := [z^k] d(z)$ is the asymptotic probability for the origin of the root-edge of $\mM_n$ to have degree $k$. Let $s_k$ denote the limit of the probability for a uniformly selected vertex of $\mM_n$ to have degree $k$. It follows from~\cite[Prop. 2.6]{MR1666953} that 
\begin{align}
\label{eq:relpd}
s_k = 4 d_k / k
\end{align}
for all integers $k \ge 1$.  Setting $s(z) = \sum_{k \ge 1} s_k z^k$, Equation~\eqref{eq:relpd} may be rephrased by
\begin{align}
zs'(z) = 4 d(z).
\end{align}
Via integration, this yields the expression
\begin{align}
s(z) =  
\frac{1}{2} \left(-1+\frac{\sqrt{2+z}}{\sqrt{2-\frac{5 z}{3}}}\right)
\end{align}

As $\mM_\infty$ is the local limit of $\mM_n$ rooted at a uniformly chosen vertex, it follows that for each $k \ge 1$ the limit $s_k$ equals the probability for the root of $\mM_\infty$ to have degree $k$. Let $r(z)$ denote the probability generating series of the degree distribution of the origin of the root-edge of the Boltzmann map $\mM$. It follows from Lemma~\ref{le:shape} that
\begin{align}
\label{eq:doit}
s(z) = q(zr(z)).
\end{align}

We are going to compute $r(z)$. To this end, let $M(z,v)$ denote the generating series of planar maps with $z$ marking edges and $v$ marking the degree of the root vertex. By duality, $M(z,v)$ coincides with the bivariate generating series where the second variable marks the degree of the outer face. The quadratic method~\cite[p. 515]{MR2483235} hence yields the known expression
\begin{align}
M(z_1,u)  = \frac{-3u^2+36u -36 + \sqrt{3(u+2)(6-5u)^3}}{6u^2(u-1)}.
\end{align}
The series $r(z)$ is related to $M(z,u)$ via
\begin{align}
r(u) = M(z_1,u) / M(z_1,  1) = \frac{3}{4} M(z_1,u).
\end{align} 	
Forming the compositional inverse of $zr(z)$ and plugging it into Equation~\eqref{eq:doit} yields the involved expression 
\begin{align}
\label{eq:expforq}
q(z) = \frac{1}{2} \left(\frac{\sqrt{\frac{20 z^2+48
			z-\sqrt{2 z-27} (2 z-3)^{3/2}+123}{z (4
			z+3)+24}}}{2 \sqrt{\frac{6-4 z}{-14 z+5
			\sqrt{2 z-27} \sqrt{2 z-3}+51}}}-1\right).
\end{align}
The first couple of terms are given by
\begin{align}
q(z) = \frac{4 z}{9}+\frac{56 z^2}{243}+\frac{848 z^3}{6561}+\frac{13408 z^4}{177147}+\frac{217664 z^5}{4782969}+\ldots.
\end{align}
Equation~\eqref{eq:expforq} allows us to evaluate the constant $q(3/4)$ in the expression for $p$ given in Equation~\eqref{eq:expforp}, yielding
\begin{align}
\label{eq:yo44}
p = 1 - q(3/4) = \frac{5 - \sqrt{17}}{2}.
\end{align}
This concludes the proof of Lemma~\ref{le:step2}.

\section{A combinatorial approach to cut vertices of planar maps}\label{sec:comb}

The goal of this section is to derive the constant $c$ in Theorem~\ref{Th1} with 
the help of a combinatorial approach to the cut vertex counting problem.

\subsection{More on generating functions of $2$-connected planar maps}

First we introduce (formally) a generating function that takes care of all vertex degrees
in 2-connected planar maps (including the one-edge map and the one-edge loop)
\[
\overline A(z;w_1,w_2,w_3,w_4, \ldots; u),
\]
where $w_k$, $k\ge 1$, corresponds to vertices of degree $k$ and we also take the root vertex
into account. As usual, $u$ corresponds to the root degree.

Similarly we introduce a variant of this generation function that takes care of all vertex degrees
in 2-connected planar maps (without the one-edge map and one-edge loop) and does not take the root
vertex into account:
\[
\overline B(z; w_2,w_3,w_4, \ldots; u).
\]

We recall that $A(z,x,1)$ corresponds to 2-connected maps (including the one-edge map and the one-edge loop), where $x$ 
takes non-root faces into account. By adding the factor $x$ we also include the root face 
and by duality $x A(z,x,1)$ is also the generating function, where $x$ corresponds to 
vertices.

Although it seems to be impossible to work directly with $\overline A(z;w_1,w_2,w_3, \ldots)$
or with $\overline B(z; w_2,w_3,w_4, \ldots; u)$, we 
have the following easy relations:
\begin{equation}\label{1.8}
\overline A(z;xv,xv^2,xv^3, \ldots; u) = x A(zv^2,x,u)
\end{equation}
and
\begin{equation}\label{1.8-2}
\overline B(z;xv,xv^2,xv^3, \ldots; u) = B(zv^2,x,u/v)
\end{equation}
This follows from the fact that every vertex of degree $k$ corresponds to $k$ half-edges.
So summing up these half-edges we get twice the number of edges.
In particular by taking derivatives with respect to $x$ and $v$ it follows that 
\[
\sum_{k\ge 1} \overline A_{w_k} (z;v,v^2,v^3, \ldots) v^k = A(zv^2,1,1) +  A_x(zv^2,1,1)
\]
and 
\[
\sum_{k\ge 1} k \overline A_{w_k} (z;v,v^2,v^3, \ldots)  v^{k-1} = 2zv A_z(zv^2,1,1).
\]

We also mention that 
\begin{align*}
\overline B(z;v^2,v^3,\ldots,1 ) &
= \overline A(z;v,v^2,\ldots, 1/v) - zv-z \\
&= A(zv^2,1,1/v) - zv-z \\
&= B(zv^2,1,1/v)
\end{align*}
as it should be according to (\ref{1.8-2}). 

It turns out that we will also have to deal with the sum
\[
\sum_{k\ge 1} \overline A_{w_k} (z;v,v^2,v^3, \ldots)
\]
which is slightly more difficult to understand. 

\begin{lemma}\label{Le1}
Let $u_1(z)$ denote the function $u_1(z) = 1/(1-V(z,1)$, where $V(z,x)$ (and $U(z,x)$) is given by
(\ref{eqUVdef}). Then we have
\begin{align*}
\sum_{k\ge 1} \overline A_{w_k} (z;v,v^2,v^3, \ldots)
 &= 2zv + z + B(zv^2,1,1/v) \\
 &+ zv \frac{ \frac{u_1(zv^2)B(zv^2,1,1/v)-B(zv^2,1,u_1(zv^2))/v}{1/v-u_1(zv^2)} + zvu_1(zv^2) }
{ 1- \frac{ u_1(zv^2)B(zv^2,1,1/v)-B(zv^2,1,u_1(zv^2))/v}{1/v-u_1(zv^2)} - zvu_1(zv^2)  }
\end{align*}
\end{lemma}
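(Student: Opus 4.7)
The first step is to interpret the left-hand side combinatorially: since $\overline{A}_{w_k}$ marks a vertex of degree $k$ and removes its $w_k$-factor, the substitution $w_j = v^j$ yields
\[
\sum_{k \ge 1} \overline{A}_{w_k}(z; v, v^2, \ldots) = \sum_{(M, v^*)} z^{|E(M)|} v^{2|E(M)| - \deg_M(v^*)},
\]
where $M$ ranges over rooted $2$-connected planar maps (including the one-edge map and the one-edge loop) and $v^* \in V(M)$ is a marked vertex. The plan is then to split according to whether $v^* = v_0$. The case $v^* = v_0$ directly contributes $A(zv^2, 1, 1/v) = B(zv^2, 1, 1/v) + zv + z$, using $A = B + zxu + zu^2$ evaluated at $x=1$, $u=1/v$, $z \mapsto zv^2$. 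For $v^* \ne v_0$, the one-edge loop contributes $0$, the one-edge map contributes $zv$, and what remains is captured by $G_B(zv^2, 1/v)$ where $G_B(z', U) := \sum_{M, v^* \ne v_0} (z')^{|E|} U^{\deg v^*}$ runs over $2$-connected non-trivial maps. Adding everything up, the lemma reduces to showing $G_B(zv^2, 1/v) = zv(X_1 + zvu_1)/(1 - X_1 - zvu_1)$.

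For this, the plan is to adapt the derivation of~\eqref{1.5}. Removing the root edge of a $2$-connected non-trivial map decomposes it into a chain of building blocks (single edges or $2$-connected subcomponents glued along an edge) linking the chain vertices $v_0 = u_0, u_1, \ldots, u_m = v_1$. A marked non-root vertex $v^*$ then either coincides with a chain vertex $u_k$ for some $k \ge 1$, or lies in the interior of a subcomponent. The plan is to write down the refined bivariate functional equation carrying both the root-face-valency variable $u$ (as in~\eqref{1.5}) and a new variable $U$ tracking $\deg_M(v^*)$; it is of the same sequence-of-blocks shape as~\eqref{1.5} but with an extra "mark" that splits the sequence into a left and a right part meeting at $v^*$.

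The hard part will be extracting the closed form from this refined equation, and here the crucial step is a kernel-method-style specialisation at $u = u_1(zv^2) = 1/(1 - V(zv^2, 1))$. By~\eqref{eqBexpl} this is precisely the value at which the square root vanishes, so that $B(z', 1, u_1)$ admits a simple rational expression in $V$; a direct computation in fact gives $B(z', 1, u_1) = V(z', 1)^2$. Setting $u = u_1$ in the refined chain equation should collapse the subcomponent contribution into exactly the divided-difference expression $X_1 = [u_1 B(zv^2, 1, 1/v) - B(zv^2, 1, u_1)/v]/(1/v - u_1)$ and the single-edge contribution into $zvu_1$, and the subsequent specialisation $U = 1/v$ then recovers the announced right-hand side. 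The principal obstacle is the careful bookkeeping required to carry the marked non-root vertex through the chain-versus-interior case split, and recognising that $u = u_1$ is the correct kernel that eliminates the auxiliary valency variable and produces the divided-difference structure seen in the statement.
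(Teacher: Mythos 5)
Your reduction of the lemma is correct and matches the paper's: splitting according to whether the marked vertex is the root reproduces exactly the paper's Equation~\eqref{eqsum3}, i.e.\ the identity $\sum_{k\ge 1}\overline A_{w_k}(z;v,v^2,\ldots)=2zv+z+B(zv^2,1,1/v)+B^\bullet(zv^2,1,1,1/v)$, and your target $G_B(zv^2,1/v)$ is precisely $B^\bullet(zv^2,1,1,1/v)$. You have also correctly identified the two technical keys: the kernel root $u_1(z)=1/(1-V(z,1))$ and the evaluation $B(z,1,u_1)=V(z,1)^2$. But the heart of the proof --- obtaining a closed form for $B^\bullet$ --- is only announced, and the route you sketch for it has a genuine gap. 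You propose to track $\deg_M(v^*)$ directly through the root-edge-deletion chain decomposition underlying~\eqref{1.5}. That decomposition is adapted to \emph{face valencies}: the divided differences in $u$ record how the root-face valency of each component is split, and face valencies are local to a single component. Vertex degrees are not: a chain vertex shared by two consecutive components has degree equal to the \emph{sum} of its degrees in both (and the endpoint $v_1$ additionally picks up the root edge), so your ``refined chain equation with a left part and a right part meeting at $v^*$'' would require auxiliary generating functions recording the degree of a prescribed pole of each component, which $B(z,x,u)$ does not provide. This is exactly why the paper dualizes at the outset and treats $B^\bullet(z,x,u,w)$ as the generating function of maps with a marked non-root \emph{face} of valency tracked by $w$.

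In the face picture the correct structure is also different from the one you describe. The marked face is either the second face incident to the root edge --- giving the auxiliary series $\tilde B(z,1,u,w)$ with the two-variable divided difference $\frac{uB(z,1,w)-wB(z,1,u)}{w-u}$ --- or it lies strictly inside one component of the sequence, which yields a genuinely \emph{recursive} catalytic equation
\[
B^\bullet(z,1,u,w)=\tilde B(z,1,u,w)+zu\,\frac{\frac{uB^\bullet(z,1,1,w)-B^\bullet(z,1,u,w)}{1-u}}{\bigl(1-\tfrac{uB(z,1,1)-B(z,1,u)}{1-u}-zu\bigr)^{2}},
\]
not a one-shot ``split the sequence at the mark''. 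The kernel specialisation $u=u_1(z)$ defined by~\eqref{equ1z} is then applied to \emph{this} equation to eliminate $B^\bullet(z,1,u,w)$ and solve for $B^\bullet(z,1,1,w)=\tilde B(z,1,u_1(z),w)/u_1(z)$ as in~\eqref{eqBbullet}; the divided difference $X_1$ in the final formula comes from $\tilde B$ evaluated at the kernel root, not from a left/right splitting at $v^*$. So while your endpoints and your instinct about $u_1$ are right, the functional equation you would need is neither written down nor derivable in the form you describe; to complete the argument you should either pass to the dual (marked-face) formulation as the paper does, or set up the full system of vertex-degree-marked component generating functions that your chain decomposition actually requires.
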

Note that some simplifications in this representations are possible.
For example we have 
\[
B(zv^2,1,u_1(zv^2)) = V(zv^2,1)^2.
\]

\begin{proof}
We note that the derivative with respect to $w_k$ 
marks a vertex of degree $k$ and discounts it. By substituting $w_k$ by $v^k$ we, thus, see that
the resulting exponent of $v$ is twice the number of edges minus the degree of the marked vertex.
Hence we have to cover the situation, where we mark a vertex and keep track of the degree
of the marked vertex.

Let $B^\bullet(z,x,u,w)$ be the generating function of vertex marked 2-connected planar maps, 
where the marked vertex is different from the root and where $u$ takes care of
the root degree and $w$ on the degree of the pointed vertex. By duality this is also the generating 
function of face marked 2-connected planar maps, where $u$ takes care of the root face valency and
$w$ of the valency of the marked face (that is different from the root face).
Then we have
\begin{equation}\label{eqsum3}
\sum_{k\ge 1} \overline A_{w_k} (z;v,v^2,v^3, \ldots) = 2zv +z + B(zv^2,1,1/v) + B^\bullet(zv^2,1,1,1/v).
\end{equation}
The term $2zv$ corresponds to the one-edge map, the term $z$ to the one-edge loop, 
the term $B(zv^2,1/v)$ to the case, where the root
vertex is marked and the third term $B^\bullet(zv^2,1,1,1/v)$ to the case, where a vertex different
from the root is marked. Note that the substitution $u = 1/v$ (or $w = 1/v$) discounts the degree
of the marked vertex in the exponent of $v$ as needed.

Thus, it remains to get an expression for $B^\bullet(z,1,u,w)$. For this purpose we start with
the generating function $B(z,1,u)$ and determine first the generating function
$\tilde B(z,x,u,w)$ (for $x=1$), where the additional variable $w$ takes care of the valency of the second face 
incident to the root edge. By using the same construction as above we have
\[
\tilde B(z,1,u,w) = zuw \frac{ \frac{uB(z,1,w)-wB(z,1,u)}{w-u} + zuw }{ 1- \frac{ uB(z,1,w)-wB(z,1,u)}{w-u} - zuw  }.
\]
This gives (by again applying this construction)
\[
B^\bullet(z,1,u,w) = \tilde B(z,1,u,w) + zu  
\frac { \frac{uB^\bullet(z,1,1,w)-B^\bullet(z,1,u,w)}{1-u}}
{\left( 1- \frac{ uB(z,1,1)-B(z,1,u)}{1-u} - zu    \right)^2 }.
\]
This equation can be solved with the help of the kernel method. By rewriting it to
\begin{align*}
& B^\bullet(z,1,u,w) \left( 1 + \frac{zu}{1-u} \frac 1{\left( 1- \frac{ uB(z,1,1)-B(z,1,u)}{1-u} - zu    \right)^2 } \right) \\
&= \tilde B(z,1,u,w) + \frac{zu^2 B^\bullet(z,1,1,w)}{1-u}\frac 1{\left( 1- \frac{ uB(z,1,1)-B(z,1,u)}{1-u} - zu    \right)^2 }.
\end{align*}
Let $u_1(z)$ be defined by the equation
\begin{equation}\label{equ1z}
 1 + \frac{zu_1(z)}{1-u_1(z)} \frac 1{\left( 1- \frac{ u_1(z)B(z,1,1)-B(z,1,u_1(z))}{1-u_1(z)} - zu_1(z)    \right)^2 } = 0
\end{equation}
Then it follows that 
\[
B(z,1,u_1(z),w) + \frac{zu_1(z)^2 B^\bullet(z,1,1,w)}{1-u_1(z)}\frac 1{\left( 1- \frac{ u_1(z)B(z,1,1)-B(z,1,u_1(z))}{1-u_1(z)} - zu_1(z)    \right)^2 }
= 0
\]
or
\begin{align}
B^\bullet(z,1,1,w) &= \frac{\tilde B(z,1,u_1(z),w)}{u_1(z)}  \label{eqBbullet} \\
&=zw \frac{ \frac{u_1(z)B(z,1,w)-w B(z,1,u_1(z))}{w-u_1(z)} + z w u_1(z) }
{ 1- \frac{ u_1(z)B(z,1,w)-w B(z,1,u_1(z))}{ w -u_1(z)} - z w u_1(z) }.
\nonumber
\end{align}
By using (\ref{eqBexpl}) and (\ref{eqUVdef}) it is a nice (but tedious) exercise to show that $u_1(z) = 1/(1-V(z,1)$.
Note that $u_1(z)$ satisfies the cubic equation $u_1(z) = 1 + zu_1(z)^3$. Thus, $u_1(z)$ is also the
generating function of ternary rooted trees.
\end{proof}

\subsection{Cut Vertices in Random Planar Maps}\label{sec:4.2}

Let $M_0(z,y)$ denote the generating function of planar maps with at least one edge, where the root vertex is 
not a cut point and where $z$ takes care of the number of edges and $y$ of the number of 
cut-points (that are then different from the root vertex). 

Next let $M_r(z,y)$ denote the generating function of (all) planar maps, where
 $z$ takes care of the number of edges and $y$ of the number of 
non-root cut-points. 

Finally let $M_a(z,y)$ denote the generating function of (all) planar maps, where
 $z$ takes care of the number of edges and $y$ of the number of 
(all) cut-points. 

Obviously we have the following relation between these three generating functions:
\begin{equation}\label{1.6}
M_a(z,y) = y M_r(z,y) - (y-1)(1+ M_0(z,y)).
\end{equation}
Note that $M_a(z,1) = M_r(z,1) = M(z)$.

Furthermore we set 
\[
E_a(z) = \left.\frac{\partial M_a(z,y)}{\partial y} \right|_{y=1} \quad \mbox{and}\quad
E_r(z) = \left.\frac{\partial M_r(z,y)}{\partial y} \right|_{y=1}. 
\]
Clearly, the generating function $E_a(z)$ is related to the expected number $\mathbb{E}[X_n]$ of
cutpoints:
\[
E_a(z) = \sum_{n\ge 0} M_n \mathbb{E}[X_n] z^n.
\]
Our first main goal is to obtain relations for $E_a(z)$ which will enable us to 
obtain asymptotics for $\mathbb{E}[X_n]$.

By differentiating (\ref{1.6}) with respect $y$ and setting $y=1$ we obtain
\[
E_a(z) = E_r(z) + M(z) - 1- M_0(z,1).
\]

With the help of the above notions we obtain the following (formal relation):
\begin{equation}\label{1.8-2aa}
M_a(z,y) = 1 + 
\overline A \left( z; y M_r(z,y) -y +1, yM_r(z,y)^2 -y +1, \ldots; 1 \right).
\end{equation}
The right hand side is based on the block-decomposition (similarly to (\ref{1.5-2})) and 
takes care, whether the vertices of the block that contains the root edge become cut-vertices or not.

Similarly we obtain
\begin{equation}\label{1.8-3}
M_0(z,y) = \overline B \left( z; y M_r(z,y)^2 -y +1, yM_r(z,y)^3 -y +1, \ldots; 1 \right) + z(y M_r(z,y) -y +1) + z.
\end{equation}
In particular if we set $y=1$ we obtain
\[
M_0(z,1) = \overline B \left( z; M(z)^2, M(z)^3 , \ldots; 1 \right) = 
B(zM(z)^2,1,1/M(z)) + zM(z)+z.
\]
This now gives
\begin{equation}\label{1.7}
E_a(z) = E_r(z) + M(z) - 1- B(zM(z)^2,1,1/M(z))-zM(z)-z.
\end{equation}

By differentiating (\ref{1.8-2aa}) with respect to $y$ and setting $y=1$ we, thus, obtain
\begin{align*}
E_a(z) &= \sum_{k\ge 1} \overline A_{w_k} \left( z; M(z), M(z)^2, \ldots; 1 \right) \\
& \qquad \times \left(  M(z)^k  -1 + k M(z)^{k-1} E_r(z)   \right) \\
&=  \sum_{k\ge 1}\overline A_{w_k} \left( z; M(z), M(z)^2, \ldots \right) M(z)^k \\
&- \sum_{k\ge 1}\overline A_{w_k} \left( z; M(z), M(z)^2, \ldots \right) \\
&+ E_r(z) \sum_{k\ge 1} k \overline A_{w_k} \left( z; M(z), M(z)^2, \ldots \right) M(z)^{k-1}.
\end{align*}
Note that 
\[
\sum_{k\ge 1} \overline A_{w_k} (z;M(z),M(z)^2, \ldots) M(z)^k =  A(zM(z)^2,1,1) +  A_x(zM(z)^2,1,1),
\]
\[
\sum_{k\ge 1} k \overline A_{w_k} (z;M(z),M(z)^2, \ldots) M(z)^{k-1} = 2 zM(z) A_z(zM(z)^2,1,1),
\]
whereas 
\begin{align*}
&\sum_{k\ge 1} \overline A_{w_k} (z;M(z),M(z)^2, \ldots) \\ &= 
2zM(z)+z + B(zM(z)^2,1,1/M(z)) + B^\bullet(zM(z)^2,1,1,1/M(z))\\
&= 2zM(z)+z + B(zM(z)^2,1,1/M(z)) \\
&+ zM(z) \frac{ \frac{u_1(zM(z)^2)B(zM(z)^2,1,1/M(z))-B(zM(z)^2,1,u_1(zM(z)^2))/M/z)}{1/M(z)-u_1(zM(z)^2)} + zM(z)u_1(zM(z)^2) }{ 1- \frac{ u_1(zM(z)^2)B(zM(z)^2,1,1/M(z))-B(zM(z)^2,1,u_1(zM(z)^2))/M(z)}{1/M(z)-u_1(zM(z)^2)} - zM(z)u_1(zM(z)^2)  }
\end{align*}
This finally leads to the explicit formula for $E_a(z)$:
\begin{align}
E_a(z) & = \frac 1{ 1- 2 zM(z) A_z(zM(z)^2,1,1)} \label{eqEcfinal}   \\ 
& \quad \times \biggl[ 
A(zM(z)^2,1,1) +  A_x(zM(z)^2,1,1) \nonumber \\
& \qquad  
- 2zM(z)-z - B(zM(z)^2,1,1/M(z)) - B^\bullet(zM(z)^2,1,1,1/M(z)) \nonumber \\  
& \qquad  
+ 2zM(z)A_z(zM(z)^2,1,1)\left( B(zM(z)^2,1,1/M(z)) -M(z) + zM(z)+z+ 1   \right)  \biggr],   \nonumber
\end{align}
where 
\begin{align*}
&B^\bullet(zM(z)^2,1,1,1/M(z)) \\&= 
zM(z) \frac{ \frac{u_1(zM(z)^2)B(zM(z)^2,1,1/M(z))-B(zM(z)^2,1,u_1(zM(z)^2))/M/z)}{1/M(z)-u_1(zM(z)^2)} + zM(z)u_1(zM(z)^2) }{ 1- \frac{ u_1(zM(z)^2)B(zM(z)^2,1,1/M(z))-B(zM(z)^2,1,u_1(zM(z)^2))/M(z)}{1/M(z)-u_1(zM(z)^2)} - zM(z)u_1(zM(z)^2)  }
\end{align*}

\subsection{Asymptotics}

We start with a proper representation of $B_x(z,1,1)$ and $B_z(z,1,1)$.

\begin{lemma}\label{Le2}
Let $B(z,x,u)$ be given by (\ref{eqBexpl}) and $u_1(z) = 1/(1-V(z,1))$ as in Lemma~\ref{Le1}. Then we have
\begin{equation}\label{eqLe21}
B_x(z,1,1) = \frac{u_1(z)-1}{u_1(z)}Q(z)(1-Q(z))
\end{equation}
and
\begin{equation}\label{eqLe22}
B_z(z,1,1) = \frac{u_1(z)-1}{z\,u_1(z)}Q(z)(1-Q(z)) + u_1(z)-1
\end{equation}
where $Q(z)$ abbreviates
\[
Q(z) =  \frac{V(z,1)^2}{u_1(z)-1}  - \frac{u_1(z) B(z,1,1)}{u_1(z)-1} + z\, u_1(z).
\]
\end{lemma}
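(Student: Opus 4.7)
The plan is to reduce the two identities to routine chain-rule computations, after first extracting a compact closed form for $B(z,x,1)$ as a polynomial in $U=U(z,x)$ and $V=V(z,x)$.

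First I would set $u=1$ directly in (\ref{eqBexpl}) and observe that the radicand factors as a perfect square,
\[
1 - 2U(1+V-2UV) + U^2(1-V)^2 = \bigl(1 - U(1+V)\bigr)^2.
\]
Taking the branch that is positive for small $z$ and combining with the polynomial part collapses the whole expression to
\[
B(z,x,1) = UV(1 - U - V).
\]
At $x=1$ the system (\ref{eqUVdef}) forces $U(z,1)=V(z,1)$, so in particular $B(z,1,1) = V^2(1-2V)$.

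Next I would apply implicit differentiation to (\ref{eqUVdef}). At $x=1$ (where $U=V$) the two resulting $2\times 2$ linear systems yield
\[
V_x = \frac{z}{(1-3V)(1+V)}, \qquad U_x = \frac{2V}{1-V}V_x, \qquad U_z = V_z = \frac{1}{(1-V)(1-3V)},
\]
and a short manipulation gives $U_x+V_x = z/[(1-3V)(1-V)]$ and $U_z+V_z = 2/[(1-V)(1-3V)]$. Since both $\partial_U[UV(1-U-V)]$ and $\partial_V[UV(1-U-V)]$ equal $V(1-3V)$ at $U=V$, the chain rule delivers
\[
B_x(z,1,1) = \frac{Vz}{1-V}, \qquad B_z(z,1,1) = \frac{2V}{1-V}.
\]

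It remains to match these values with the stated formulas. Substituting $u_1 = 1/(1-V)$, $u_1-1 = V/(1-V)$, $B(z,1,1) = V^2(1-2V)$ and $z = V(1-V)^2$ into the definition of $Q$ collapses it to the identity $Q(z) = V(z,1)$, so that $(u_1-1)/u_1 = V$ and $1-Q = 1-V$. The right-hand side of (\ref{eqLe21}) then reads $V \cdot V(1-V) = V^2(1-V) = Vz/(1-V) = B_x(z,1,1)$, and the right-hand side of (\ref{eqLe22}) reads $V^2(1-V)/z + V/(1-V) = V/(1-V) + V/(1-V) = 2V/(1-V) = B_z(z,1,1)$. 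The only non-routine step is the factorization of the radicand at $u=1$; from there everything is chain rule and bookkeeping, and the main sanity check is to ensure the correct branch of the square root is used when collapsing (\ref{eqBexpl}) to $UV(1-U-V)$.
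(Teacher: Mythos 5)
Your proof is correct, but it takes a genuinely different route from the paper's. The paper never leaves the catalytic functional equation (\ref{1.5}): it differentiates that equation with respect to $x$ (resp.\ $z$), obtaining a linear catalytic equation for $B_x(z,1,u)$ (resp.\ $B_z(z,1,u)$), and then applies the kernel method --- substituting $u=u_1(z)$, defined by (\ref{equ1z}) so that the coefficient of the unknown bivariate function vanishes --- to read off $B_x(z,1,1)$ and $B_z(z,1,1)$ directly in the forms (\ref{eqLe21})--(\ref{eqLe22}); the quantity $Q(z)$ arises there naturally as $Q_0(z,1,u_1(z))$. You instead specialize the explicit solution (\ref{eqBexpl}) at $u=1$, where the radicand collapses to $\bigl(1-U(1+V)\bigr)^2$ and the whole expression to Tutte's parametrization $B(z,x,1)=UV(1-U-V)$, and then compute both derivatives by implicit differentiation of the algebraic system (\ref{eqUVdef}); matching with the stated right-hand sides reduces to the identity $Q(z)=V(z,1)$. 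I verified the key computations (the perfect-square factorization, $U=V$ at $x=1$ on the relevant branch, the values of $V_x$, $U_x$, $V_z$, $U_z$, and $Q=V$), and they are all correct; your intermediate forms $B_x(z,1,1)=V^2(1-V)$ and $B_z(z,1,1)=2V/(1-V)$ are cleaner than the stated ones. The trade-off is that your argument leans on the closed form (\ref{eqBexpl}) and the fortunate collapse at $u=1$, whereas the paper's kernel-method computation is the template that extends to the harder objects ($B^\bullet$, $B^{\bullet\bullet}$) needed elsewhere in Section~\ref{sec:comb}, which is presumably why the lemma is stated in the $Q$-based form.
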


\begin{proof}
Set 
\[
Q_0(z,x,z) = \frac{uB(z,x,1)-B(z,x,u)}{1-u} + zu
\]
Then (\ref{1.5}) rewrites to 
\[
B(z,x,u) = zxu \frac{ Q_0(z,x,u)}{1- Q_0(z,x,u)}.
\]
Hence, by taking the derivative with respect to $x$ (and then setting $x=1$) we obtain
\[
B_x(z,1,u) = zu \frac{ Q_0(z,1,u)}{1- Q_0(z,1,u)} + zu \frac{  \frac{uB_x(z,1,1)-B_x(z,1,u)}{1-u}  }{(1-Q_0(z,1,u))^2}
\]
or
\[
B_x(z,1,u) \left( 1 + \frac{zu}{(1-u)(1-Q_0(z,1,u)^2 } \right) = \frac{zu Q_0(z,1,u) }{1- Q_0(z,1,u)} 
+ \frac{zu^2 B_x(z,1,1)}{(1-u)(1-Q_0(z,1,u))^2}.
\]
If we replace $u$ by $u_1(z)$ then by (\ref{equ1z}) the left hand side vanished and, thus, the right hand side, too.
From that we obtain the explicit representation (\ref{eqLe21}) for $B_x(z,1,1)$. We just note that
\[
Q(z) = Q_0(z,1,u_1(z))
\]
since -- by (\ref{eqBexpl}) and by $u_1(z) = 1/(1-V(z,1))$ -- $B(z,1,u_1(z)) = V(z,1)^2$.

Similarly we obtain a representation for $B_z(z,1,1)$. Instead of taking the derivative with respect to $x$
we take the derivative with respect to $z$ and get
\[
B_z(z,1,u) = u \frac{ Q_0(z,1,u)}{1- Q_0(z,1,u)} + zu \frac{  \frac{uB_z(z,1,1)-B_z(z,1,u)}{1-u} +u }{(1-Q_0(z,1,u))^2}
\]
or
\[
B_z(z,1,u) \left( 1 + \frac{zu}{(1-u)(1-Q_0(z,1,u)^2 } \right) = \frac{u Q_0(z,1,u) }{1- Q_0(z,1,u)} 
+ \frac{zu^2}{(1-Q_0(z,1,u))^2} \left( \frac {  B_z(z,1,1)}{1-u } +1 \right).
\]	
Again by replacing $u$ by $u_1(z)$ the vanishing right hand side leads to (\ref{eqLe22}), the proposed
explicit representation for $B_z(z,1,1)$.
\end{proof}

This leads us the following local expansions.

\begin{lemma}\label{Le3}
We have the following local expansions in powers of $\left( 1  - \frac {27}4 z \right)$:
\begin{align}
B_x(z,1,1) &= \frac 2{27} - \frac{2\sqrt 3}{27} \sqrt{1-\frac {27}4 z} + \frac 2{81} \left( 1- \frac{27}4 z \right)
+ \frac{19\sqrt 3}{729}  \left( 1- \frac{27}4 z \right)^{3/2} + \cdots  \label{eqLe31} \\
B_z(z,1,1) &= 1- \sqrt 3 \left( 1- \frac{27}4 z \right)^{1/2} + \frac 43 \left( 1- \frac{27}4 z \right)
- \frac{35\sqrt 3}{54} \left( 1- \frac{27}4 z \right)^{3/2} + \cdots \label{eqLe32}\\
B^\bullet(z,1,1,w) &=  -4\,{\frac {w \left( -2\,w+\sqrt {4\,{w}^{2}-60\,w+81}-9 \right) }{243
-54\,w+27\,\sqrt {4\,{w}^{2}-60\,w+81}}}  \label{eqLe33}   \\
&  +{\frac {16\,\sqrt {3}{w}^{2}
 \left( -2\,w+\sqrt {4\,{w}^{2}-60\,w+81}+3 \right)}{9\, \left( 9-2
\,w+\sqrt {4\,{w}^{2}-60\,w+81} \right) ^{2} \left( 2\,w-3 \right) }} \sqrt{1-\frac {27}4 z} + \cdots  \nonumber
\end{align}
\end{lemma}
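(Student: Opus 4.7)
The three expansions all follow from the closed-form expressions already established in Lemmas~\ref{Le1} and~\ref{Le2}, combined with a singular expansion of $V=V(z,1)$ at the dominant singularity $z_0=\tfrac{4}{27}$. Setting $X:=\sqrt{1-\tfrac{27}{4}z}$ and $V=\tfrac{1}{3}-\tau$ in the defining relation $z=V(1-V)^2$ gives
\[
1-\tfrac{27}{4}z \;=\; \tfrac{27}{4}\,\tau^2(1+\tau),
\]
which inverts analytically to yield $\tau=\tfrac{2}{3\sqrt{3}}X+\alpha_2 X^2+\alpha_3 X^3+\cdots$ with explicit rational $\alpha_i$, and consequently an expansion of $V(z,1)$ in powers of $X$. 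From this I get the expansions of $u_1(z)=1/(1-V)$ (with $u_1(z_0)=\tfrac{3}{2}$), of $B(z,1,1)$ computed from (\ref{eqBexpl}) using $U=V$ when $x=1$, and of the auxiliary quantity $Q(z)$ defined in Lemma~\ref{Le2}. These form the raw material for the remaining steps.

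For (\ref{eqLe31}) and (\ref{eqLe32}), I substitute these expansions into the identities
\[
B_x(z,1,1)=\frac{u_1-1}{u_1}\,Q(1-Q),\qquad B_z(z,1,1)=\frac{u_1-1}{z\,u_1}\,Q(1-Q)+(u_1-1)
\]
of Lemma~\ref{Le2}. The constant terms follow from $V=\tfrac{1}{3}$, $u_1=\tfrac{3}{2}$, $z=\tfrac{4}{27}$: one verifies $B(z_0,1,1)=\tfrac{1}{27}$, $Q(z_0)=\tfrac{1}{3}$, and therefore $B_x(z_0,1,1)=\tfrac{2}{27}$, $B_z(z_0,1,1)=1$, matching (\ref{eqLe31})--(\ref{eqLe32}). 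The $X$-linear coefficients $-\tfrac{2\sqrt{3}}{27}$ and $-\sqrt{3}$ arise because the leading term in the expansion of $\tau$ carries a factor $\tfrac{2}{3\sqrt{3}}$; the $X^2$ and $X^3$ coefficients are obtained by carrying the substitution two further orders, which is straightforward term-by-term bookkeeping.

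For (\ref{eqLe33}) I start from the kernel-method expression~(\ref{eqBbullet}) for $B^\bullet(z,1,1,w)$. At $z=z_0$ one has $V=\tfrac{1}{3}$, $u_1=\tfrac{3}{2}$, $B(z_0,1,u_1)=V^2=\tfrac{1}{9}$, while $B(z_0,1,w)$ is read off from~(\ref{eqBexpl}) with $U=V=\tfrac{1}{3}$, producing exactly the surd $\sqrt{4w^2-60w+81}$ that appears in~(\ref{eqLe33}). Plugging these values into~(\ref{eqBbullet}) and simplifying recovers the leading rational function of $w$. The coefficient of $X$ then follows from a first-order perturbation in $X$ of $V$, $u_1$, $V^2$ and $B(z,1,w)$, computed from Step~1. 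The main obstacle is not conceptual but organizational: the expression~(\ref{eqBbullet}) is a nested fraction whose numerator and denominator both depend on $B(z,1,w)$ through a square root, so careful bookkeeping (and ideally symbolic verification) is needed to keep track of cancellations and confirm that the $X^0$ and $X^1$ coefficients collapse to the announced rational-in-$w$ forms.
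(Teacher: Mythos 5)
Your proposal is correct and follows essentially the same route as the paper's proof: invert $z=V(1-V)^2$ to expand $V(z,1)$ (hence $u_1$, $B(z,1,1)$, $B(z,1,u_1)=V^2$ and $Q$) in powers of $\sqrt{1-\tfrac{27}{4}z}$, substitute into the Lemma~\ref{Le2} formulas for (\ref{eqLe31})--(\ref{eqLe32}), and feed the expansion of $B(z,1,w)$ from (\ref{eqBexpl}) with $U=V$ into (\ref{eqBbullet}) for (\ref{eqLe33}). Your explicit check of the constant terms and the identification of the surd $\sqrt{4w^2-60w+81}$ are consistent with the stated expansions.
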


\begin{proof}
By inverting the equation $z = V(1-V)^2$ it follows that $V(z,1)$ has the local expansion
\[
V(z,1) = \frac 13 - \frac 2{3\sqrt 3} Z +   \frac 2{27} Z^2
-  \frac 5{81\sqrt{3}} Z^3 +  \cdots,
\]
where $Z$ abbreviates
\[
Z = \sqrt{1- \frac {27}4 z}.
\]
Consequently $u_1(z) = 1/(1-V(z,1))$ is given by
\[
u_1(z) = \frac 32 - \frac{\sqrt 3}2 Z + \frac 23 Z^2 
- \frac{35\sqrt{3}}{108} Z^3  \cdots
\]
We already know that
\[
B(z,1,u_1(z)) = V(z,1)^2 = {\frac{1}{9}}-{\frac {4\,\sqrt {3}}{27}}Z+{\frac{16}{81}}{Z}^{2}-{
\frac {34\,\sqrt {3}}{729}}{Z}^{3}+ \cdots
\]
and from (\ref{eqBexpl}) we directly obtain
\[
B(z,1,1) = {\frac{1}{27}}-{\frac{4}{27}}{Z}^{2}+{\frac {8\,\sqrt {3}}{81}}{Z}^{3} + \cdots
\]
Hence, the local expansion of $Q(z) = Q_0(z,1,u_1(z))$ can be easily calculated:
\[
Q(z) = \frac 13 - \frac{2\sqrt 3}{9}  Z + \frac 2{27} Z^2  
- \frac{5\sqrt 3}{243} Z^3 + \cdots,
\]
and, thus, (\ref{eqLe31}) and (\ref{eqLe32}) follow from this expansion and from (\ref{eqLe21}) and (\ref{eqLe22}). 

Finally we have to use (\ref{eqBbullet}) and the expansion for $B(x,1,w)$ to obtain (\ref{eqLe33}).
\end{proof}

This leads us to the following local expansion for $E_a(z)$ and 
a corresponding asymptotic relation.

\begin{lemma}\label{Le4}
The function $E_a(z)$ has the following local expansion
\begin{equation}\label{eqLe41}
E_a(z) = \frac{11\sqrt{17}-37}{24} - (5-\sqrt{17}) \sqrt{1-12 z} + \cdots
\end{equation}
which implies
\[
\mathbb{E}\, X_n = \frac {[z^n]\, E_a(z)}{[z^n]\, M(z)   } \sim  \frac{(5-\sqrt{17})}4 n.
\]
\end{lemma}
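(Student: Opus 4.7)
The plan is to apply singularity analysis to the explicit closed form~(\ref{eqEcfinal}) for $E_a(z)$, which expresses $E_a$ as a rational combination of composites of the form $F(\zeta(z),1,g(z))$ with $\zeta(z) := zM(z)^2$ and $F \in \{A, A_x, A_z, B, B^\bullet\}$. The singular behaviour of each $F$ at $z_0 = 4/27$ is controlled by Lemmas~\ref{Le2} and \ref{Le3} together with the identities $A = B + zxu + zu^2$ (whence $A_x = B_x + z$, $A_z = B_z + 2$ at $x=u=1$), and $\zeta(z)$ has its singularity at $z_1 = 1/12$ inherited from $M(z)$, so the first task is to transfer the expansions from $z_0$ to $z_1$.

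For this I would read off directly from~(\ref{1.2}) that
\[
M(z) = \tfrac{4}{3} - \tfrac{4}{3}(1-12z) + \tfrac{8}{3}(1-12z)^{3/2} + O\!\left((1-12z)^2\right),
\]
whence $\zeta(z_1) = 4/27 = z_0$, and a short calculation yields
\[
1 - \tfrac{27}{4}\zeta(z) = 3(1-12z) + O\!\left((1-12z)^{3/2}\right),\qquad \sqrt{1-\tfrac{27}{4}\zeta(z)} = \sqrt{3}\,Z + O(Z^2),
\]
where $Z := \sqrt{1-12z}$. This is the key transfer: each factor $(1-(27/4)\cdot)^{1/2}$ appearing in Lemma~\ref{Le3} contributes an order-$Z$ term at $z_1$. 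The structural observation that keeps the rest of the computation tractable is that the denominator $1 - 2zM(z)A_z(\zeta(z),1,1)$ does not vanish at $z_1$: indeed $A_z(z_0,1,1) = B_z(z_0,1,1)+2 = 3$ and $2z_1 M(z_1) = 2/9$, so the denominator equals $1/3$ at $z_1$. Consequently the singular expansion of $E_a(z)$ is obtained by formal division of the expansions of numerator and denominator, with no delicate cancellation.

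To identify the constant term $E_a(z_1) = (11\sqrt{17}-37)/24$, one substitutes the rational values $M(z_1)=4/3$, $U(z_0,1)=V(z_0,1)=1/3$, $u_1(z_0)=3/2$ into the closed forms~(\ref{eqBexpl}) for $B(z_0,1,3/4)$ and~(\ref{eqBbullet}) for $B^\bullet(z_0,1,1,3/4)$; the $\sqrt{17}$ emerges from the discriminant $\sqrt{17/36}$ inside $B(z_0,1,3/4)$. The coefficient $-(5-\sqrt{17})$ of $Z$ follows by carrying each expansion one order further using the formulae for $V$ and $u_1$ from the proof of Lemma~\ref{Le2} and for $B, B_x, B_z, B^\bullet$ from Lemma~\ref{Le3}, then collecting. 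Given the singular expansion~(\ref{eqLe41}), the standard transfer theorem \cite[Thm.~VI.3]{MR2483235} gives $[z^n]\sqrt{1-12z} \sim -\tfrac{1}{2\sqrt{\pi}}\,12^n n^{-3/2}$ and $M_n \sim \tfrac{2}{\sqrt{\pi}}\,12^n n^{-5/2}$, so dividing yields $\mathbb{E}\,X_n = [z^n]E_a(z)/M_n \sim \tfrac{5-\sqrt{17}}{4}\,n$, as claimed.

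The main obstacle is not conceptual but purely algebraic. The formula~(\ref{eqEcfinal}) contains six composite ingredients depending on $\zeta(z)$ and $1/M(z)$, and $B^\bullet$ has a nonlinear fractional form in $u_1(\zeta)$ via~(\ref{eqBbullet}). Carrying out the $Z$-coefficient computation by hand is tedious, and one would in practice verify the precise constant $-(5-\sqrt{17})$ with a computer algebra system. The essential mathematical content is the non-vanishing of the denominator at $z_1$ and the compositional rule $\sqrt{1-(27/4)\zeta(z)} = \sqrt{3}\,Z + O(Z^2)$, which together make the singular expansion of the quotient~(\ref{eqEcfinal}) mechanically computable.
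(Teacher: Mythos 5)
Your proposal is correct and follows essentially the same route as the paper: singularity analysis of the closed form~(\ref{eqEcfinal}), using the transfer $\sqrt{1-\tfrac{27}{4}zM(z)^2}=\sqrt{3}\sqrt{1-12z}+O(|1-12z|)$ to carry the expansions of Lemmas~\ref{Le2} and~\ref{Le3} from $z_0=4/27$ to $z_1=1/12$, followed by evaluation of the constants and the standard coefficient transfer with division by $M_n$. The only cosmetic difference is that the paper shortens the bookkeeping by observing that $M(z)$, $A(zM(z)^2,1,1)$ and $B(zM(z)^2,1,1/M(z))$ have pure $(1-12z)^{3/2}$ singularities, so only their values at $z_1$ enter, whereas you propose to expand every ingredient one order further and collect.
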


\begin{proof}
We note that several parts of (\ref{eqEcfinal}) have a dominant singularity of the form $(1-12z)^{3/2}$. 
For those parts only the value at $z_1=1/12$ influences the the constant term and coefficient
of $\sqrt{1-12z}$ in the local expansion of $E_a(z)$. In particular we have
\begin{align*}
M(z_1) &= \frac 43, \\
A(z_1M(z_1)^2,1,1) &= \frac 13, \\
B(z_1M(z_1)^2,1,1/M(z_1)) &= \frac{3\sqrt{17} - 11}{72}.
\end{align*}
The other functions appearing will have a non-zero coefficient at the $\sqrt{1-12z}$--term.
Note also that we have
\[
\sqrt{1-\frac {27}4 zM(z)^2} = \sqrt{3}\sqrt{1-12z} + O(|1-12 z|),
\]
Hence we get
\begin{align*}
A_z(zM(z)^2,1,1) &= 3- 3 \sqrt{1-12z} + \cdots ,\\
A_x(zM(z)^2,1,1) &= \frac 29 - \frac 29 \sqrt{1-12z} + \cdots ,\\
B^\bullet(zM(z)^2,1,1,1/M(z)) &=
{\frac { \left( 7-\sqrt {17}\right)  \left( 5-\sqrt {17} \right) }{
72}} - \frac{\left( 1+\sqrt {17} \right)  \left( -5+\sqrt {17} \right) ^{2} }{48} \sqrt{1-12z} + \cdots
\end{align*}
and so (\ref{eqLe41}) follows.

From (\ref{eqLe41}) it directly follows that
\[
[z^n]\, E_a(z) \sim \frac{5-\sqrt{17}}{2\sqrt{\pi}} n^{-3/2} 12^n.
\]
By dividing that by $M_n = [z^n] M(z) \sim (2/\sqrt{\pi}) n^{-5/2} 12^n$ the final result follows.
\end{proof}

\subsection{Second moment computations}\label{sec:comb-variance}

In principle the above combinatorial method can be extended to compute asymptotics
of higher moments. In what follows we indicate how this can be worked out for the 
second moment for the number of cut vertices in random planar maps. This leads, too, 
to an asymptotic expansion for the variance. As mentioned in the Introduction it is
expected that we have $\mathbb{V}{\rm ar}[X_n] \sim c_2 n$ for some constant $c_2 > 0$.

We set 
\[
E_a^{(2)}(z) = \left.\frac{\partial^2 M_a(z,y)}{\partial y^2} \right|_{y=1} \quad \mbox{and}\quad
E_r^{(2)}(z) = \left.\frac{\partial^2 M_r(z,y)}{\partial y^2 } \right|_{y=1}. 
\]
It is clear that 
\[
E_a^{(2)}(z) = \sum_{n\ge 0} M_n \mathbb{E}[X_n(X_n-1)] \, z^n.
\]
Since
\[
\mathbb{V}{\rm ar}[X_n]  = \mathbb{E}[X_n(X_n-1)] + \mathbb{E}[X_n] - \mathbb{E}[X_n]^2
\]
it is sufficient to obtain precise asymptotics for $\mathbb{E}[X_n(X_n-1)]$.

In principle we can work as above. However, instead of first derivatives we have to
consider second derivatives with respect of $y$ in the equations (\ref{1.6}) and (\ref{1.8-2aa}) 
(and a first derivative in equation (\ref{1.8-3})). From that it is possible to get an explicit 
expression for $E_a^{(2)}(z)$ in terms of the form
\[
\sum_{k, \ell \ge 1} \overline A_{w_kw_\ell} \left( z; M(z), M(z)^2, \ldots; 1 \right) k M(z)^{k+\ell-1} \quad\mbox{or}\quad
\sum_{k, \ell \ge 1} \overline A_{w_kw_\ell} \left( z; M(z), M(z)^2, \ldots; 1 \right)
\]
(and similar functions) and in terms of functions that have been already calculated.
All but one appearing functions can be handled easily by taking derivatives of (\ref{1.8}) or (\ref{eqsum3}).
What remains is to handle the double sum
\begin{equation}\label{eqdoublesum}
\sum_{k, \ell \ge 1} \overline A_{w_kw_\ell} \left( z; v, v^2, \ldots; 1 \right)
\end{equation}
which is the generating function of $2$-connected planar maps, where two faces are marked and discounted.

In order to handle (\ref{eqdoublesum}) we can proceed (again) as above but with slightly more care.
For the sake of brevity we do not work out the simple cases, where the root face is one of the marked faces.
The main problem is to handle those cases, where two different non-root faces are marked.
Let $B^{\bullet\bullet}(z,x,u,w_1,w_2)$ be the generating function of 2-connected planar maps, where
two different non-root faces, that are ordered, are marked and where $u$ takes care of
the root face valency, $w_1$ on the valency of the first marked face, and $w_2$ on the
valency of the second marked face.
By using (again) the fact, that a 2-connected planar map, where we delete the root edge, decomposes
into a sequence of 2-connected maps or single edges, we obtain the relation
\begin{align*}
B^{\bullet\bullet}(z,1,u,w_1,w_2)
&= zu w_1\frac { \frac{uB^\bullet(z,1,w_1,w_2)-w_1B^\bullet(z,1,u,w_2)}{w_1-u}}
{\left( 1- \frac{ uB(z,1,w_1)-w_1B(z,1,u)}{w_1-u} - zuw_1    \right)^2 } \\
&+ zu w_2\frac { \frac{uB^\bullet(z,1,w_2,w_1)-w_2B^\bullet(z,1,u,w_1)}{w_2-u}}
{\left( 1- \frac{ uB(z,1,w_2)-w_2B(z,1,u)}{w_2-u} - zuw_2    \right)^2 } \\
&+ zu \frac { \frac{uB^\bullet(z,1,1,w_1)-B^\bullet(z,1,u,w_1)}{1-u} \frac{uB^\bullet(z,1,1,w_2)-B^\bullet(z,1,u,w_2)}{1-u} }
{\left( 1- \frac{ uB(z,1,1)-B(z,1,u)}{1-u} - zu    \right)^3 } \\
&+ zu \frac { \frac{uB^{\bullet\bullet}(z,1,1,w_1,w_2)-B^{\bullet\bullet}(z,1,u,w_1,w_2)}{1-u}}
{\left( 1- \frac{ uB(z,1,1)-B(z,1,u)}{1-u} - zu    \right)^2 }.
\end{align*}
The first two terms on the right hand side correspond to the cases, where one of the marked faces is precisely
the second face of the root edge, the third term handles the case, where the two marked faces are in different
2-connected components of the above mentioned sequence, and finally the last term corresponds to the case,
where the two marked faces are in the same component. This is again a linear catalytic equation, that can be rewritten 
into the form 
\begin{align*}
& B^{\bullet\bullet}(z,1,u,w_1,w_2) \left( 1 + \frac{zu}{1-u} \frac 1{\left( 1- \frac{ uB(z,1,1)-B(z,1,u)}{1-u} - zu    \right)^2 } \right) \\
&= \frac{zu^2 B^{\bullet\bullet}(z,1,1,w_1,w_2)}{1-u}\frac 1{\left( 1- \frac{ uB(z,1,1)-B(z,1,u)}{1-u} - zu    \right)^2 } +
H(z,u,w_1,w_2),
\end{align*}
where $H(z,u,w_1,w_2)$ contains just already known functions. Again we set $u = u_1(z)$ so that the left hand side cancels and 
we obtain from the right hand side
\[
B^{\bullet\bullet}(z,1,1,w_1,w_2) = \frac{H(z,u_1(z),w_1,w_2)}{u_1(z)}.
\]
Finally we have to consider the specialization
\[
B^{\bullet\bullet}(zv^2,1,1,1/v,1/v) 
\]
to get the {\it main part} of (\ref{eqdoublesum}). (As mentioned above we skip the easier parts, where one of the
marked faces is the root face.)

This procedure leads to an explicit expression for $E_a^{(2)}(z)$. It is therefore clear that the
asymptotic analysis can be worked out. By the way all involved functions are algebraic which also shows
that $E_a^{(2)}(z)$ is algebraic, too.

\section{Central limit theorems 
}\label{sec:subcritical}

\subsection{Proof of Theorem~\ref{Th2}}
\begin{proof}
We consider a subcritical class $M^*$ of planar maps that satisfies the scheme (\ref{1.5-2}).
It is clear that we can apply the same procedure as in Section~\ref{sec:4.2} for taking cut vertices
into account. This leads to the system of equations (\ref{1.6}), \eqref{1.8-2aa}, (\ref{1.8-3})
for the unknown functions $M_0(x,y)$, $M_r(x,y)$, $M_a(x,y)$. 

If we set $y=1$ (that is, we just count planar maps in this subclass $M^*$ of size $n$)
then we have $M^*(z) = M_r(z,1) = M_a(z,1)$ and, thus, $M^*(z)$ satisfies the (single) equation
\[
M^*(z) = 1 + A(z M^*(z)^2, 1, 1).
\]
By assumption we are in a subcritical and aperiodic situation. By standard methods (see \cite[Theorem 2.19]{MR2484382})
it follows that $M^*(z)$ has a squareroot singularity at $z_1$ and a local representation of
the form
\[
M^*(z) = g(z) - h(z)\sqrt{1- z/z_1},
\]
where $g(z)$ and $h(z)$ are analytic at $z_1$ and $h(z_1) \ne 0$. Furthermore $z_1$ is the only
singularity on the circle $|z| = z_1$.

The main step for proving asymptotic normality is to show that we get the same singular behaviour
if $y$ varies around $y=1$ (see \cite[Theorem 2.21]{MR2484382}).
For this purpose it is sufficient to show that the function 
\begin{equation}\label{eqanalytic}
(z,y,M) \mapsto \overline A( z; yM -y+1, yM^2-y+1, \ldots; 1)
\end{equation}
is analytic for $|z| <  z_1 + \eta$, $|y-1| < \eta$, $|M| < M^*(z_1) + \eta$ for
some $\eta> 0$. 

Recall that 
\[
\overline A (z; v, v^2, \ldots; 1) = A(zv^2, 1,1)
\]
and that $A(zv^2, 1,1)$ is analytic as long $|zv^2| < z_0$.
We note that $M^*(z_1) > 1$ and recall that by assumption $z_1 M^*(z_1)^2 < z_0$. 
Hence there exists $\eta > 0$ with
\[
2\eta + \eta^2 \le 2 M^*(z_1) \eta \quad \mbox{and}\quad (z_1+\eta)M^*(z_1)^2 (1+3\eta)^2 < z_0.
\]
The first property is equivalent to 
\[
(1 + \eta)(M^*(z_1) + \eta) + \eta \le M^*(z_1) (1 + 3 \eta)
\]
which implies for all integers $r\ge 1$
\[
(1 + \eta)(M^*(z_1) + \eta)^r  + \eta \le (M^*(z_1) (1 + 3 \eta))^r.
\]
Now suppose that $|z| <  z_1 + \eta$, $|y-1| < \eta$, $|M| < M^*(z_1) + \eta$.
Then we have
\[
|yM^r - y + 1| \le (1+ \eta) (M^*(z_1) + \eta)^r  + \eta \le (M^*(z_1) (1 + 3 \eta))^r < z_0
\]
so that 
\begin{align*}
| \overline A( z; yM -y+1, yM^2-y+1, \ldots; 1) | &\le  \overline A( |z|; |yM -y+1|, |yM^2-y+1|, \ldots; 1)  \\
&\le \overline A( z_1 + \eta; M^*(z_1) (1 + 3 \eta), (M^*(z_1) (1 + 3 \eta))^2, \ldots; 1) \\
&=  A((z_1+\eta)M^*(z_1)^2 (1+3\eta)^2  , 1,1)
\end{align*}
Since $(z_1+\eta)M^*(z_1)^2 (1+3\eta)^2 < z_0$ this shows that the mapping (\ref{eqanalytic}) is
analytic in this range.

We can now do a similar computation for $\overline B (z; v^2, v^3, \ldots; 1)$. Summing up it follows that
the solution functions $M_0(z,y)$, $M_r(z,y)$, $M_a(z,y)$ system of equations (\ref{1.6}), \eqref{1.8-2aa}, (\ref{1.8-3}) 
have squareroot singularities of the form
\begin{align*}
M_0(z,y) &= g_0(z,y) - h_0(z,y) \sqrt{1 - z/\rho(y)}, \\
M_r(z,y) &= g_r(z,y) - h_r(z,y) \sqrt{1 - z/\rho(y)}, \\
M_a(z,y) &= g_a(z,y) - h_a(z,y) \sqrt{1 - z/\rho(y)}
\end{align*}
for $|y-1| < \eta$, where $\rho(y)$ is an analytic function with $\rho(1) = z_1$ and where
the functions $g_0,h_0,g_r,h_r,g_a,g_h$ are analytic for $|z-z_1| < \varepsilon$ and $|y-1|< \eta$
(for some $\varepsilon> 0$ and satisfy $h_0(z_1,1)\ne 0$, $h_r(z_1,1) \ne 0$, $h_a(z_1,1)\ne 0)$.
In principle we can apply \cite[Theorem 2.33]{MR2484382}, however, the positivity condition is 
only partly satisfied. Since we already know that we have squareroot singularities for $y=1$
we can easily circumvent this condition.

Since $z_1$ is the only singularity of the function $M^*(z) = M_a(z,1) = M_r(z,1)$ on the circle $|z| \le z_1$
it follows by continuity (and the implicit function theorem) that $\rho(y)$ is the only singularity 
of the functions $z \mapsto M_0(z,y)$, $z \mapsto M_r(z,y)$, $z \mapsto M_a(z,y)$ if $y$ is sufficiently
close to $1$.  

This completes the proof of a central limit theorem of the form (\ref{eqTh2}), compare with \cite[Theorem 2.23]{MR2484382}.
\end{proof}

\subsection{Proof of Theorem \ref{th:blocks}}

\begin{proof}

The proof is based on the univariate version Equation \eqref{1.5-2}, namely 
$$
M(z) = 1+A(zM(z)^2).
$$
We add a new variable marking non-root blocks and we get 
$$
M(z,w) = 1+A(z(1+w(M(z,w)-1))^2).
$$
We know that the composition scheme is critical for $w=1$, hence by continuity it is also critical (with a singularity of type $3/2$) for $w$ sufficiently close to $1$. . This implies that the singularities of $M(z,w)$ come from those of $A$. The unique dominant singularity of $A(z)$ is at $z=4/27$, hence, for fixed $w$ near 1, $M(z,w)$ has a singularity at $\rho(w)$ given by
$$
\frac{4}{27} = \rho(w) (1+w(M(\rho(w),w)-1))^2.
$$
The fastest way to compute $\rho(w)$ is to compute the minimal polynomial of $M(z,u)$ by elimination using the minimal polynomial of $A(z)$. We obtain a polynomial $P(z,w,M(z,w))$ of degree 4 in $M$, whose discriminant is equal to 
$$
D(z) = z \left( w-1 \right) ^{2} \left( 256\,{w}^{3}{z}^{2}-32\,{w}^{2}z+1
\right)  \left( 3\,{w}^{2}z+18\,wz+27\,z-4 \right) ^{3}.$$
By general principles \cite{MR2483235}, the singularities of $M$ must be among the roots of the discriminant. We discard the first two trivial factors and we are left with two candidates. Setting $w=1$ we must recover the singularity $z=1/12$ of the univariate function $M(z)$, and this implies that the right factor is the last one. Solving for $z$ we get the unique singularity  $\rho(w)$ as 
$$
	\rho(w) = \frac{4}{3(w^2+6w+9)}.
	$$
This implies a central limit theorem for the associated random variable as in the previous section with moments
$$
	\mathbb{E}(X_n) \sim -\frac{\rho'(1)}{\rho(1)}n = \frac{n}{2}, \qquad \mathbb{V}\hbox{ar}(X_n) \sim  -\frac{\rho''(1)}{\rho(1)}n + \mathbb{E}(X_n)+ \mathbb{E}(X_n)^2 = \frac{3n}{8}.$$
 This concludes the proof. 
\end{proof}

%
%
%

\subsection{Outerplanar maps}

We want to illustrate that Theorem~\ref{Th2} can be extended to some further subclasses of planar maps 
like outerplanar maps. 
For this case we will prove the central limit theorem for the number of cut vertices in two different ways, 
first with the help of generating function and second with probabilistic arguments.

\subsubsection{Outerplanar maps with $n$ vertices}

We recall that the generating function $M_O(z)$ of outerplanar maps satisfies (\ref{eqGFouterplanar}), 
where the function 
\[
A_O(z) =  \frac 14 \left( 1+ z - \sqrt{1 -6z + z^2} \right)
\]
is the generating function for polygon dissections (plus a single edge) 
has radius of convergence  $z_{0,O} = 3 - 2\sqrt 2$. 
From this we obtain
\[
M_O(z) = \frac{z \left( 3 - \sqrt{1-8z} \right)}{2(1+z)}.
\]
The radius of convergence of $M_O(z)$ is $z_{1,O} = \frac 18$ so that $M_O(z_{1,O}) = \frac 1{18}
< z_{0,O}$. Note that  $M_O(z)$ has a squareroot singularity (as it has to be). 
Now let $M_O(z,y)$ denote the generating function of outerplanar maps, where $y$ takes care of the
number of cut-vertices. We already mentioned that $M_O(z,y)$ satisfies the functional equation
\[
M_O(z,y) = \frac z{1- A_O(z + y(M_O(z,y)-z))}
\]
which gives
\[
M_O(z,y) = \frac{z\left( 3-z+yz - \sqrt{(y-1)z^2 - (6+2y)z +1 }  \right)}{2(1+yz)}.
\]
Clearly,  if $y$ is sufficiently close to $1$ then the singularities
of $M_O(z,y)$ and $A_O(z)$ do not interact and so we obtain a squareroot singularity 
\[
\rho(y) = \frac{3 + y - 2 \sqrt{2 + 2y}}{(y-1)^2}.
\]
for the mapping $z\mapsto M_O(z,y)$. Note that $\rho(y)$ is actually regular at $y=1$ and satisfies
$\rho(1) = 1/8$.

By \cite[Theorem 2.25]{MR2484382} we immediately obtain a central limit theorem with $\mathbb{E}[X_n] = c\, n + O(1)$ and
variance $\mathbb{V}{\rm ar}[X_n] = \sigma^2 n + O(1)$, where
\[
c = - \frac{\rho'(1)}{\rho(1)} = \frac 14 \quad\mbox{and}\quad
\sigma^2 = - \frac{\rho''(1)}{\rho(1)} + \mu + \mu^2 = \frac 5{32}.
\]

Next we show how this central limit theorem can be obtained by probabilistic tools.
As illustrated in Figure~\ref{fi:decompnew2}, any outerplanar map $O$ with $n$ vertices corresponds bijectively to a planted plane tree $T(O)$ with $n$ vertices and a family $(\beta(v))_{v \in T(O)}$ of ordered sequences of dissections of polygons such that the  the outdegree of a vertex $v \in T(O)$ agrees with the number of non-root vertices in the sequence $\beta(v)$. Details on this decomposition may be found in~\cite[Sec. 2]{aihpstufler2017}.

\begin{figure}[h]
	\centering
	\begin{minipage}{1.0\textwidth}
		\centering
	\includegraphics[width=1.0\textwidth]{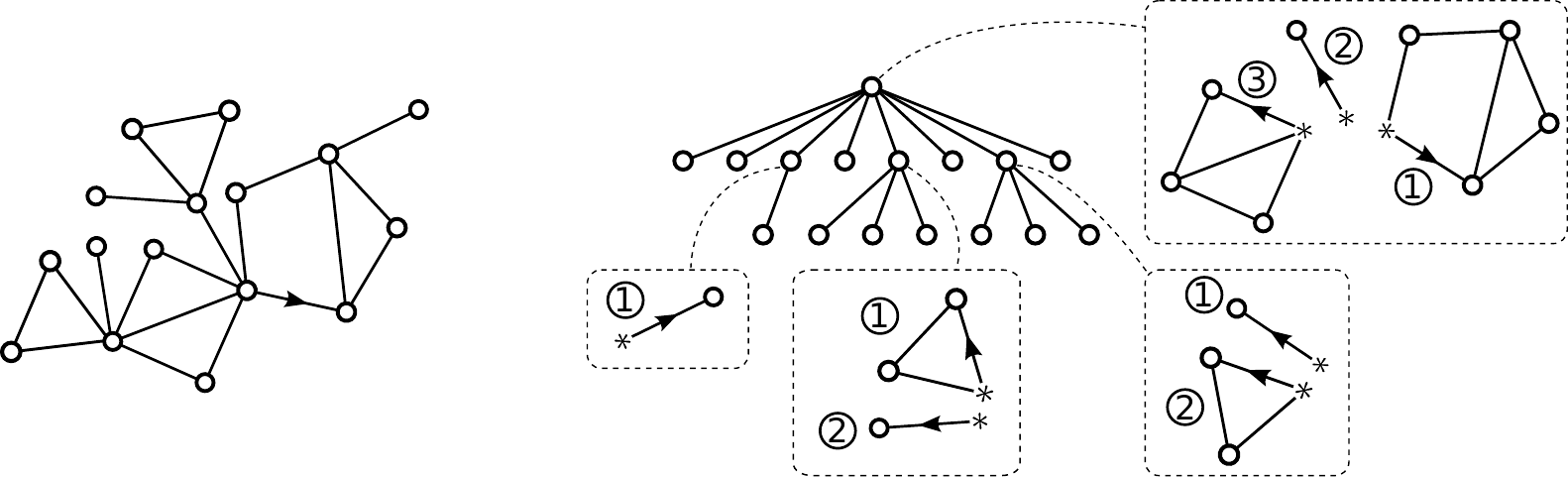}
		\caption{The decomposition of simple outerplanar rooted maps into decorated trees.\protect\footnotemark}
		\label{fi:decompnew2}
	\end{minipage}
\end{figure}

\footnotetext{Source of image: \cite[Fig. 2]{aihpstufler2017}.}

The root-vertex of $O$ corresponds to the root-vertex of $T(O)$. Any non-root vertex in $O$ is a cut-vertex if and only if it is not a leaf of $T(O)$. That is, the number $\mathrm{Cut}(O)$ of cut vertices in $O$ and the number $\mathrm{L}(T(O))$ of leaves in $T(O)$ are related by

\begin{align}
\label{eq:asdf}
\mathrm{Cut}(O) = (n-1) - \mathrm{L}(T(O)) + 
 \mathbf{1}_{\text{root of $O$ is a cut vertex}}.
\end{align}

If $\mO_n$ is the uniform outerplanar map with $n$ vertices, then $\cT_n := T(\mO_n)$ is a simply generated tree, obtained from conditioning a critical Galton--Watson tree on having $n$ vertices. The fact that outerplanar maps are subcritical in the sense of~\eqref{eq:subcout} ensures that the offspring distribution $\xi$  of the Galton--Watson tree may be chosen to satisfy $\Ex{\xi}=1$ and have finite exponential moments. By standard branching processes results (see for example~\cite{zbMATH06538878}) it holds that the number of leaves of $\cT_n$ satisfies a normal central limit theorem 

\begin{align}
\frac{\mathrm{L}(\cT_n) - n p_0}{\sqrt{n}} \convd N(0, \gamma^2),
\end{align}
with 
\begin{align}
p_0 := \Pr{\xi=0} \qquad \text{and} \qquad \gamma^2 := p_0 - p_0^2(1 + 1/\Va{\xi}).
\end{align}
By Equation~\eqref{eq:asdf} it follows that
\begin{align}
\label{eq:normal}
\frac{\mathrm{Cut}(\mO_n) - n (1- p_0)}{\sqrt{n}} \convd N(0, \gamma^2).
\end{align}
Equation~\eqref{eqGFouterplanar-2} enables us to determine the offspring distribution $\xi$ explicitly (see \cite[Sec. 4.2.1]{aihpstufler2017}), and show that 
\[
\Ex{\xi}=1, \qquad \Va{\xi} = 18, \qquad \Pr{\xi=0} = 3/4.
\]
Thus
\begin{align}
\frac{\mathrm{Cut}(\mO_n) - n/4}{\sqrt{n}} \convd N(0, 5/32).
\end{align}

\subsubsection{Bipartite outerplanar maps with $n$ vertices}

Finally we discuss bipartite outerplanar maps. 
Here we have again the relation (\ref{eqGFouterplanar}), however, the generating function $A_O(z)$ has to
be replaced by the generating function $A_O^{\mathrm{bip}}(z)$ of bipartite polygon dissections (plus a single edge). As illustrated in Figure~\ref{fi:diss}, any dissection may be decomposed into a root-edge and a series composition of other dissections.

\begin{figure}[h]
	\centering
	\begin{minipage}{0.8\textwidth}
		\centering
	\includegraphics[width=0.6\textwidth]{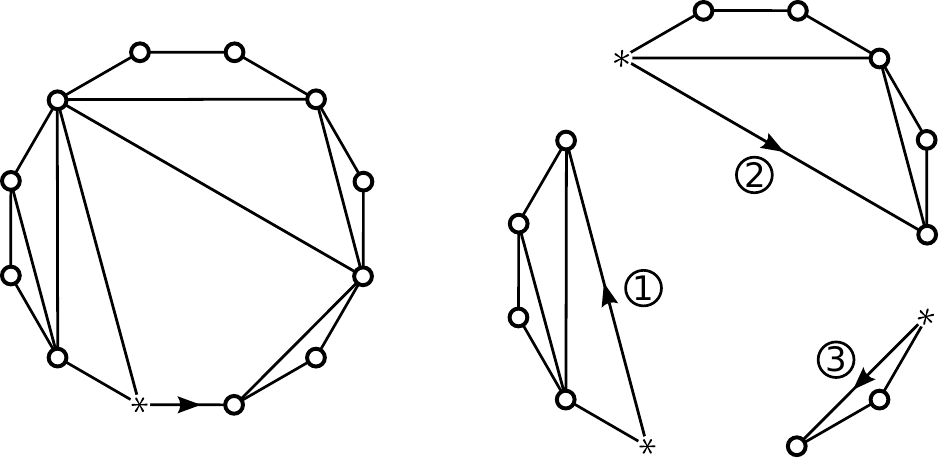}
		\caption{The decomposition of edge-rooted dissections of polygons.\protect\footnotemark}
		\label{fi:diss}
	\end{minipage}
\end{figure}  
\footnotetext{Source of image: \cite[Fig. 4]{aihpstufler2017}.}
Such a dissection is bipartite, if and only if all of its parts are bipartite and the number of parts is odd. Hence $A_O^{\mathrm{bip}}(z)$ is the solution of the equation
\begin{align}
A_O^{\mathrm{bip}}(z) = z + \frac{A_O^{\mathrm{bip}}(z)^3}{1-A_O^{\mathrm{bip}}(z)^2}.
\end{align}
The radius of convergence of
$A_O^{\mathrm{bip}}(z)$ equals \[
z_{0,O} = -\frac{1}{8} \sqrt{5-\sqrt{17}} \left(\sqrt{17}-7\right) = 0.33674\ldots.
\] The radius of convergence of the series $M_O^{\mathrm{bip}}(z)$ (satisfying
$M_O^{\mathrm{bip}}(z) = z/(1-A_O^{\mathrm{bip}}(M_O^{\mathrm{bip}}(z)))$ is then given by $z_{1,O} = -5 + 3 \sqrt{3} = 0.19615\ldots$ and we have
\[
M_O^{\mathrm{bip}}(z_{1,O}) = \frac{2}{3} \left(2 \sqrt{3}-3\right) = 0.309401\ldots < z_{0,O} .
\]
 Consequently we are (again)
in a subcritical situation and obtain (as above) a central limit theorem. By a more refined 
analysis we also obtain
\[
c = \frac{\sqrt{3}-1}{2} \quad \mbox{and} \quad \sigma^2 = \frac{11 \sqrt{3}-17}{12}.
\]

As in the case of (all) outerplanar maps it is also possible to prove the central 
limit theorem by probabilistic tools.
Note that an outerplanar map is bipartite if and only if all its blocks are. Hence the bijection in Figure~\ref{fi:decompnew2} restricts to a bijection between bipartite outerplanar maps and plane trees decorated by ordered sequences of bipartite dissections. In particular, the uniform random bipartite planar map $\mO_n^\mathrm{bip}$ may be generated by decorating a simply generated tree $\cT_n^\mathrm{bip}$, obtained by conditioning some $\xi^\mathrm{bip}$-Galton--Watson tree.
 This allows us to explicitly determine the offspring distribution $\xi^\mathrm{bip}$, yielding  (see \cite[Sec. 4.2.2]{aihpstufler2017})
\[
\Ex{\xi^\mathrm{bip}}=1, \qquad \Va{\xi^\mathrm{bip}} = 9(\sqrt{3}-1), \qquad \Pr{\xi^\mathrm{bip}=0} = (3 - \sqrt{3})/2.
\]
Equation~\ref{eq:normal} holds analogously for $\mO_n^{\mathrm{bip}}$ and $\xi^\mathrm{bip}$, yielding
\begin{align}
\frac{\mathrm{Cut}(\mO_n^{\mathrm{bip}}) - n(-1 + \sqrt{3})/2}{\sqrt{n}} \convd N(0, (-17 + 11\sqrt{3})/12).
\end{align}

\bibliographystyle{siam}
\bibliography{cutvertices}

\end{document}